\newtheorem{dfn}{Definition}[section]
\newtheorem{prop}[dfn]{Proposition}
\newtheorem{theo}[dfn]{Theorem}
\newtheorem{ex}[dfn]{Example}
\newcommand{\RR}{\mathbb{R}}
\newcommand{\cA}{\mathcal{A}}
\newcommand{\cH}{\mathcal{H}}
\newcommand{\cU}{\mathcal{U}}
\newcommand{\ra}{\rightarrow}
\newcommand{\fb}{\mathfrak{b}}
\newcommand{\fM}{\mathfrak{M}}
\newcommand{\fg}{\mathfrak{g}}
\newcommand{\com}{\mathbin{{\scriptstyle \circ }}}
\newcommand{\ten}{\mathbin{\otimes}}
\newcommand{\id}{\mathord{\mathrm{id}}}
\newcommand{\inv}{\mathord{\mathrm{inv}}}
\newcommand{\cu}{\mathord{\epsilon}}
\newcommand{\cm}{\mathord{\Delta}}
\newcommand{\C}{\mathord{\mathcal{C}^{\infty}}}
\newcommand{\Cc}{\mathord{\mathcal{C}^{\infty}_{c}}}
\newcommand{\G}{\mathord{\Gamma^{\infty}}}
\newcommand{\Gc}{\mathord{\Gamma^{\infty}_{c}}}
\newcommand{\D}{\mathord{{\mathscr{D}}}}
\newcommand{\Prim}{\mathord{{\mathscr{P}}}}
\newcommand{\Univ}{\mathord{{\mathscr{U}}}}
\newcommand{\Simet}{\mathord{{\mathscr{S}}}}
\newcommand{\Bsp}{\mathord{\mathcal{B}_{\mathit{sp}}}}
\newcommand{\Gsp}{\mathord{\mathcal{G}_{\mathit{sp}}}}
\title[]
      {A Cartier-Gabriel-Kostant structure theorem for Hopf algebroids}
\author{J. Kali\v{s}nik}
\address{Institute of Mathematics, Physics and Mechanics,
         University of Ljubljana, Jadranska 19,
         1000 Ljubljana, Slovenia}
\email{jure.kalisnik@fmf.uni-lj.si}
\author{J. Mr\v{c}un}
\address{Department of Mathematics, University of Ljubljana,
         Jadranska 19, 1000 Ljubljana, Slovenia}
\email{janez.mrcun@fmf.uni-lj.si}
\thanks{This work was supported in part by
        the Slovenian Research Agency (ARRS) project J1-2247.}
\subjclass[2010]{16T05,17B35,22A22}
\begin{document}

\begin{abstract}
In this paper we give an extension of the Cartier-Gabriel-Kostant
structure theorem to Hopf algebroids.
\end{abstract}

\maketitle

\section{Introduction}

For any Hopf algebra $A$, one can consider the associated twisted
tensor product Hopf algebra $\Gamma\ltimes U(\fg)$, where $\Gamma$
is the group of grouplike elements of $A$ and $U(\fg)$ denotes the
universal enveloping algebra of the Lie algebra $\fg$ of primitive
elements of $A$. 
The classical Cartier-Gabriel-Kostant structure theorem
characterizes the Hopf algebras which are isomorphic to their
associated twisted tensor product Hopf algebras.
In this paper we give an extension of this
theorem to Hopf algebroids.

A Hopf $R$-algebroid is a $R/k$-bialgebra
\cite{Nic85,NicWei82,Tak77}, equipped with a bijective antipode.
More recently, structures of this type have been studied in
\cite{BloTaWe08,BloWe08,BoSz04,Kap07,Lu96,Mal00,Xu98}. Most
notably, Hopf algebroids naturally appear as the convolution
algebras of \'{e}tale Lie groupoids \cite{Mrc01,Mrc07_1}: for an
\'{e}tale Lie groupoid $G$ over a manifold $M$, the convolution
algebra $\Cc(G)$ of smooth real functions with compact
support on $G$ is a $\Cc(M)/\RR$-bialgebra with the antipode
induced by the inverse map in the groupoid.
In fact, for any Hopf
$\Cc(M)$-algebroid $A$, the antipode-invariant weakly grouplike
elements of $A$ can be used to construct the associated spectral
\'{e}tale Lie groupoid $\Gsp(A)$ over $M$ \cite{Mrc07_1}.
Furthermore, the
universal enveloping algebra $\Univ(\C(M),L)$ of
a $(\RR,\C(M))$-Lie algebra $L$
is a $\C(M)/\RR$-bialgebra.
If $\fb$ is the Lie algebroid, associated to a bundle of Lie groups over $M$,
then the universal enveloping algebra
$\Univ(\fb)=\Univ(\C(M),\G(\fb))$ of $\fb$ \cite{MoMrc10} 
is a Hopf $\C(M)$-algebroid and in fact a Hopf
algebra over $\C(M)$. For any Hopf $\C(M)$-algebroid $A$, the
space of primitive elements $\Prim(A)$ of $A$ has a structure of a
$(\RR,\C(M))$-Lie algebra.

In this paper we show that if an \'{e}tale Lie groupoid $G$ over
$M$ acts on a bundle of Lie groups $B$ over $M$, then $G$ acts on
the associated universal enveloping algebra $\Univ(\fb)$ of the
Lie algebroid $\fb$ and the associated twisted tensor product
$G\ltimes\Univ(\fb)$ is a Hopf $\Cc(M)$-algebroid. Furthermore, 
in Theorem \ref{theo_Main theorem} we
characterize the Hopf $\Cc(M)$-algebroids of this form:
we give a local condition 
on a Hopf $\Cc(M)$-algebroid $A$ under which
there is a natural isomorphism
\[
A\cong \Gsp(A)\ltimes\Univ(\fb(\Prim(A)))
\]
of Hopf $\Cc(M)$-algebroids, where
$\fb(\Prim(A))$ is a $\Gsp(A)$-bundle of Lie algebras over $M$
with $\Prim(A)=\Gc(\fb(\Prim(A)))$.

\section{Preliminaries}

For the convenience of the reader and to fix the notations, we
will first recall some basic definitions concerning Lie groupoids
\cite{Mac05,MoMrc03,MoMrc05} and Hopf algebroids
\cite{Bo09,Mrc01,Mrc07_1}.

A {\em Lie groupoid} over a smooth (Hausdorff) manifold $M$ is a
groupoid $G$ with objects $M$, equipped with a structure of a
smooth, not-necessarily Hausdorff, manifold such that all the
structure maps of $G$ are smooth, while the source and the target
maps $s,t\!:G\to M$ are submersions with Hausdorff fibers. We
write $G(x,y)=s^{-1}(x)\cap t^{-1}(y)$ for the manifold of arrows
from $x\in M$ to $y\in M$, and $G_{x}=G(x,x)$ for the isotropy
Lie group at $x$.
A Lie groupoid is {\em \'{e}tale} if all its structure maps are local
diffeomorphisms. A {\em bisection} of an \'{e}tale Lie groupoid $G$ is
an open subset $V$ of $G$ such that both $s|_{V}$ and $t|_{V}$ are
injective. Any such bisection $V$ gives a diffeomorphism
$\tau_{V}\!:s(V)\ra t(V)$ by $\tau_{V}=t|_{V}\com (s|_{V})^{-1}$.
The product of bisections $V$ and $W$ of $G$ is the bisection
$V\cdot W=\{gh\,|\,g\in V,\,h\in W,\, s(g)=t(h)\}$, while the inverse
of the bisection $V$ is the bisection $V^{-1}=\{g^{-1}\,|\,g\in V\}$.

A {\em bundle of Lie groups} $B$ over $M$ is a Lie groupoid over $M$
for which the maps $s$ and $t$ coincide.
An {\em action} of a Lie groupoid $G$ over $M$
on a bundle of Lie groups $B$ over $M$ is a smooth action
of $G$ on $B$ along the map $B\ra M$
such that for any $g\in G(x,y)$ the map
$B_{x}\to B_{y}$, $b\mapsto g\cdot b$,
is an isomorphism of Lie groups. A {\em $G$-bundle} of Lie
groups over $M$ is a bundle of Lie groups over $M$ equipped with
an action of $G$. For such a $G$-bundle of Lie groups over $M$
one defines the associated semidirect product Lie
groupoid $G\ltimes B$ over $M$ \cite{MoMrc03} by virtually the
same formulas as those for the semidirect product of groups.
Analogously one
defines $G$-bundles of finite dimensional algebras
(Lie algebras, Hopf algebras) over $M$. We emphasize that these are
required to be locally trivial only as vector bundles. 

We next briefly review the definition of a Hopf algebroid. We
refer the reader to \cite{KalMrc08,Mrc01,Mrc07_1} for more details.
Similar, but inequivalent notions have been studied in
\cite{BloTaWe08,BloWe08,BoSz04,Kap07,Lu96,Mal00,Tak77,Xu98}.

Let $R$ be a commutative (associative, not necessarily unital)
algebra over a field $k$.
A {\em Hopf $R$-algebroid} is a $k$-algebra
$A$ such that $R$ is a commutative, not necessarily central,
subalgebra of $A$ in which $A$ has local units, equipped with a
structure of a left $R$-coalgebra on $A$ (with comultiplication $\cm$
and counit $\cu$) and a $k$-linear involution $S\!:A\ra A$
(antipode) such that
\begin{enumerate}
\item [(i)]   $\cu|_{R}=\id$ and $\cm|_{R}$ is the
              canonical embedding $R\subset
              A\ten_{R}^{ll}A$,

\item [(ii)]  $\cm(A)\subset A\overline{\ten}_{R}A$, where
$A\overline{\ten}_{R}A$ is the algebra consisting of those
elements of $A\ten_{R}A$ on
which both right $R$-actions coincide,

\item [(iii)] $\cu(ab)=\cu(a\cu(b))$ and $\cm(ab)=\cm(a)\cm(b)$
for any $a,b\in A$,

\item [(iv)]   $S|_{R}=\id$ and $S(ab)=S(b)S(a)$ for any
$a,b\in A$,

\item [(v)]  $\mu_{A}\com(S\ten\id)\com\cm=\cu\com S$, where
$\mu_{A}\!:A\ten^{rl}_{R}A\ra A$ denotes the multiplication.
\end{enumerate}
A homomorphism of Hopf $R$-algebroids is defined in the obvious way.

The {\em anchor} of a Hopf $R$-algebroid is the homomorphism of algebras
$\rho\!:A\ra\text{End}_{k}(R)$, given by
$\rho(a)(r)=\cu(ar)$ for $a\in A$ and $r\in R$.
In general, an element $a\in A$ is {\em primitive} if
$\cm(a)=\eta\ten a+a\ten\eta$
for some $\eta\in A$ such that $\eta a=a\eta=a$. If
$A$ is unital, this definition is equivalent with the usual one.
We denote by $\Prim(A)$ the left $R$-module of primitive elements of $A$.
It follows
immediately that $\cu(\Prim(A))=0$. Equipped with the
restriction of the anchor and the natural Lie bracket, the left $R$-module
$\Prim(A)$
becomes a $(k,R)$-Lie algebra \cite{MoMrc10,Rin63}.
Its universal enveloping algebra is denoted by
$\Univ(R,\Prim(A))$.

In this paper, we will mostly focus to the case where
$R$ is the algebra $\Cc(M)$ of smooth real
functions with compact support on a smooth manifold $M$.
We will write $\C(M)_{x}$ for the algebra of germs at a point $x$
of smooth functions on $M$.
Recall that the maximal
ideals of $\Cc(M)$ correspond bijectively to the points of $M$: to
any $x\in M$ we assign the ideal $I_{x}$ of functions which vanish
at $x$. 
The localization of a locally unital
$\Cc(M)$-module (algebra, coalgebra)
$\fM$ at $I_{x}$ is a $\C(M)_{x}$-module (algebra, coalgebra),
which will be denoted by $\fM_{x}$. Note that
$\fM_{x}\cong\fM/N_{x}\fM$, where $N_{x}$ is the ideal of functions
with trivial germ at $x$. An element $m\in\fM$ equals zero if and only if
its localization $m_{x}$ equals zero for any $x\in M$ \cite{Mrc07_2}.
A homomorphism of $\Cc(M)$-modules is bijective
(injective, surjective) if and only if all its localizations are bijective
(injective, surjective). A $\Cc(M)$-module $\fM$ is {\em locally free},
by definition, if $\fM_{x}$ is a free $\C(M)_{x}$-module for
every $x\in M$. A locally free $\Cc(M)$-module $\fM$ is of {\em constant finite rank},
if there exists $n\in\mathbb{N}$ such that $\text{rank}(\fM_{x})=n$ for all $x\in M$.

Let $A$ be a Hopf $\Cc(M)$-algebroid.
An element $a\in A$ is {\em $S$-invariant weakly grouplike} if there
exists $a'\in A$ such that $\cm(a)=a\ten a'$ and
$\cm(S(a))=S(a')\ten S(a)$ (this implies $\cm(S(a))=S(a)\ten
S(a')$). We denote by $G^{S}_{w}(A)$ the set of $S$-invariant
weakly grouplike elements of $A$. An element
$a\in G^{S}_{w}(A)$ is {\em normalized at} $y\in M$ if
$\cu(a)_{y}=1$, and {\em normalized on} $U\subset M$ if it is normalized
at each $y\in U$. Element of the type $a_{y}\in A_{y}$, where
$a\in G^{S}_{w}(A)$ is normalized at $y$, is called an {\em arrow} of
$A$ at $y$. The arrows of $A$ at $y$ form a subset
$G^{S}(A_{y})$ of the set $G(A_{y})$ of grouplike elements of the
$\C(M)_{y}$-coalgebra $A_{y}$. The union of all arrows of $A$
has a natural structure of an \'{e}tale Lie groupoid over $M$
\cite{Mrc07_1}. This groupoid is referred to as the  {\em spectral 
\'{e}tale Lie groupoid} associated to $A$, and denoted by
\[
\Gsp(A)\;.
\]
Each $a\in G^{S}_{w}(A)$, normalized on an open subset $W$ of $M$,
determines a bisection $a_{W}=\{a_{y}\,|\,y\in W\}$ of $\Gsp(A)$,
an open subset $V_{W,a}$ of $M$
and a diffeomorphism $\tau_{W,a}\!:V_{W,a}\ra W$,
implicitly determined by
the equality $fa=a(f\com\tau_{W,a})$ for all $f\in\Cc(W)$.

\section{Convolution Hopf algebroids}

Let $G$ be an \'{e}tale Lie groupoid over $M$ and let
$\cA$ be a $G$-bundle of finite-dimensional unital 
algebras over $\RR$.
We equip the space $\Gc(t^{*}\cA)$ with an
associative convolution product, given by the formula
\[
(ab)(g)=\sum_{g=hk}a(h)(h\cdot b(k))
\]
for any $a,b\in\Gc(t^{*}\cA)$. While only valid for Hausdorff
groupoids, the above formula naturally extends to the
non-Hausdorff case if we use the definition of the space of
sections with compact support
of a vector bundle from \cite{CraMo00}. The corresponding
algebra, called the
{\em convolution algebra of $G$ with coefficients in} $\cA$,
will be denoted by
\[
\Cc(G;\cA)\;.
\]
In particular, if $A$ is a finite dimensional
unital algebra, then we have the convolution
algebra $\Cc(G;A)$
of $G$ with coefficients in the trivial
$G$-bundle with fiber $A$.
Note that $\Cc(G)=\Cc(G;\RR)$.
Since $\cA$ contains the trivial bundle with fiber $\RR$,
the convolution algebra $\Cc(G;\cA)$ contains
the algebra $\Cc(G)$ and henceforth also $\Cc(M)$.
Furthermore, the algebra $\Cc(G;\cA)$ contains $\Cc(M;\cA)$
as well. In fact, the algebra $\Cc(G;\cA)$ 
has local units in $\Cc(M)$ and is
generated by the sum of $\Cc(G)$ and $\Cc(M;\cA)$.
Note also that $\Cc(M;\cA)$ is a subalgebra
of the algebra $\G(\cA)=\C(M;\cA)$ of sections
of $\cA$ with the pointwise multiplication.

It is often inconvenient to compute products of arbitrary
functions. Things get simplified if we restrict the calculations
to the functions with supports in bisections. In particular, this is
the easiest way of defining the convolution product on
$\Cc(G;\cA)$ if $G$ is non-Hausdorff. The bisections of $G$ form a
basis for the topology on $G$, so we can decompose an arbitrary
function $a\in\Cc(G;\cA)$ as a sum $a=\sum_{i=1}^{n}a_{i}$, where
each $a_{i}$ has its support in a bisection $V_{i}$ of $G$. In the
non-Hausdorff case, this is in fact the definition of
$\Cc(G;\cA)$.

Any function $a\in\Cc(G;\cA)$ with support in a bisection $V$ can
be written in the form $a=a_{0}\com t|_{V}$ for a unique
$a_{0}\in\Cc(t(V);\cA)$. Moreover, if $a'\in\Cc(V)$ is a function
equal to $1$ on a neighbourhood of the support of $a$, we can express
$a$ as the product $a_{0}a'$ 
(we say that such a decomposition $a_{0}a'$ is {\em standard
decomposition} of $a$).
The bisection $V$ induces an isomorphism of bundles of algebras
$(\mu_{V},\tau_{V})\!:\cA|_{s(V)}\ra\cA|_{t(V)}$.
For any $c_{0}\in\C(M;\cA)$ we define the function
$\mu_{V}(c_{0})\in\C(t(V);\cA)$ by
\[ \mu_{V}(c_{0})=\mu_{V}\com c_{0}|_{s(V)}\com\tau_{V}^{-1}\;.
\]
Let $b\in\Cc(G;\cA)$ be another function with support in a bisection
of $G$, say $W$, and with a standard decomposition $b_{0}b'$.
Observe that
\begin{equation}\label{eq_convolution product}
ab=(a_{0}\com t|_{V})(b_{0}\com
t|_{W})=(a_{0}\mu_{V}(b_{0}))\com t|_{V\cdot W}\;,
\end{equation}
where $a_{0}\mu_{V}(b_{0})$ denotes the product of functions
in $\C(t(V);\cA)$. The product in $\Cc(G;\cA)$ can be therefore
expressed as a combination of the multiplication in
$\C(M;\cA)$ and the action of $G$ on $\cA$. Since the algebra
$\Cc(G;\cA)$ contains $\Cc(M)$ as a commutative subalgebra,
we can consider $\Cc(G;\cA)$ as a left $\Cc(M)$-module. The
action of $\Cc(M)$ on $\Cc(G;\cA)$ can be expressed as a
scalar multiplication along the fibers of the map $t$. We will
often use the isomorphism
\begin{equation}\label{eq_isomorphism of modules of sections}
\Cc(G;\cA)\cong\C(M;\cA)\ten_{\Cc(M)}\Cc(G)
\end{equation}
of left $\Cc(M)$-modules \cite{GrHaVa78}. 
Under this isomorphism, the function
$a$ corresponds to the tensor $a_{0}\ten a'$, while
the equation (\ref{eq_convolution product}) translates to
\[
(a_{0}\ten a')(b_{0}\ten b')=a_{0}\mu_{V}(b_{0})\ten a'b'\;.
\]

\subsection{The twisted tensor product Hopf algebroid}

Let $\cH$ be a $G$-bundle of finite dimensional
Hopf algebras (with involutive antipodes).
The space $\C(M;\cH)$ is a Hopf algebra over
$\C(M)$. Write $\cm_{0}$, $\cu_{0}$ and $S_{0}$
for the comultiplication,
the counit and the antipode
of $\C(M;\cH)$ respectively.
Since $\Cc(G)$ is a coalgebra over
$\Cc(M)$ \cite{Mrc07_1}, the tensor product
$\C(M;\cH)\ten_{\Cc(M)}\Cc(G)$,
which we identify with $\Cc(G;\cA)$
by (\ref{eq_isomorphism of modules of sections}),
naturally becomes a
coalgebra over $\Cc(M)$. The coalgebra structure is
given by
\[
\cm(a_{0}\ten a')=\sum_{i=1}^{n}(a_{0}^{i,1}\ten a')\ten(a_{0}^{i,2}\ten a')\;,
\]
\[
\cu(a_{0}\ten a')=\cu_{0}(a_{0})\;,
\]
for any function $a\in\Cc(G;\cA)$ with support in a bisection $V$
and with standard decomposition $a_{0}a'$, where
$\cm_{0}(a_{0})=\sum_{i=1}^{n}a_{0}^{i,1}\ten a_{0}^{i,2}$
and $a_{0}^{i,1}$, $a_{0}^{i,2}$ are chosen so that
$a_{0}^{i,1}a'$ and $a_{0}^{i,2}a'$ are standard decompositions.
Furthermore, there is the antipode on $\C(M;\cH)\ten_{\Cc(M)}\Cc(G)$
given by
\[
S(a_{0}\ten a')=\mu_{V^{-1}}(S_{0}(a_{0}))\ten S_{G}(a')\;,
\]
where $S_{G}$ is the antipode on $\Cc(G)$. 
With respect to the isomorphism (\ref{eq_isomorphism of modules of sections}),
we can express $S$ also in the form
$S(a)(g)=g\cdot S_{\cH}(a(g^{-1}))$,
where $S_{\cH}$ denotes the antipode on $\cH$.

\begin{prop}\label{prop_Hopf algebroid}
The convolution algebra $\Cc(G;\cH)$, together with the structure maps
$(\cm,\cu,S)$ defined above, is a Hopf $\Cc(M)$-algebroid.
\end{prop}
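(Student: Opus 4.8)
The plan is to verify the five axioms (i)--(v) of a Hopf $\Cc(M)$-algebroid for $A=\Cc(G;\cH)$ with $R=\Cc(M)$ and $k=\RR$. Associativity of the convolution product and the fact that $\Cc(M)$ is a commutative subalgebra in which $A$ has local units have already been recorded, so the work is entirely in checking that $\cm$, $\cu$ and $S$ are well defined and interact correctly with the convolution product. Because the bisections form a basis for $G$, every $a\in\Cc(G;\cH)$ decomposes as a finite sum of functions supported in bisections, and the structure maps are defined on such functions through a standard decomposition $a=a_{0}a'$; so the first step is to show that the formulas for $\cm$, $\cu$ and $S$ are independent of the chosen bisection $V\supset\supp(a)$ and of the chosen standard decomposition. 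Here I would use that two standard decompositions of $a$ differ by a factor equal to $1$ near $\supp(a)$, so that the ambiguity is absorbed by the $\Cc(M)$-module structure, together with the localization principle that an element of a locally unital $\Cc(M)$-module vanishes iff all its localizations vanish.

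With well-definedness in hand, the coalgebra axioms follow by transport through the identification (\ref{eq_isomorphism of modules of sections}): coassociativity and counitality of $\cm$ and $\cu$ reduce to those of $\cm_{0},\cu_{0}$ on the Hopf algebra $\C(M;\cH)$ over $\C(M)$ and of the coproduct on the coalgebra $\Cc(G)$. Axiom (i) is then checked by embedding $f\in\Cc(M)$ through its standard decomposition on the unit bisection and computing $\cu(f)=\cu_{0}(f)=f$ together with $\cm(f)$ equal to the canonical embedding into $A\ten_{\Cc(M)}^{ll}A$. Axiom (ii) is immediate from the shape of the coproduct: in $\cm(a_{0}\ten a')=\sum_{i}(a_{0}^{i,1}\ten a')\ten(a_{0}^{i,2}\ten a')$ the same factor $a'$ appears in both legs, which forces the two right $\Cc(M)$-actions to agree and places $\cm(a)$ in $A\overline{\ten}_{\Cc(M)}A$.

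The main step is axiom (iii), the multiplicativity of $\cu$ and $\cm$. Combining the product formula (\ref{eq_convolution product}), namely $(a_{0}\ten a')(b_{0}\ten b')=a_{0}\mu_{V}(b_{0})\ten a'b'$, with the definition of $\cm$, the identity $\cm(ab)=\cm(a)\cm(b)$ reduces to two facts: that $\cm_{0}$ is an algebra homomorphism on $\C(M;\cH)$, and that the bundle isomorphism $\mu_{V}$ induced by a bisection $V$ is compatible with the fiberwise coproduct, i.e. $\cm_{0}\com\mu_{V}=(\mu_{V}\ten\mu_{V})\com\cm_{0}$ and $\cu_{0}\com\mu_{V}=\cu_{0}$. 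This compatibility is exactly the hypothesis that $G$ acts on $\cH$ by isomorphisms of Hopf algebras. I expect this to be the principal obstacle: one must push the coproduct past the twisting isomorphisms $\mu_{V}$ and simultaneously arrange the standard decompositions of $a_{0}^{i,1}$, $a_{0}^{i,2}$ and of the product so that both sides are expressed over a common bisection $V\cdot W$; keeping this bookkeeping consistent, rather than any single identity, is the delicate part, and where necessary I would settle an equality of $\Cc(M)$-linear maps by localizing at each $x\in M$, where it becomes a statement about the finite-dimensional fiber Hopf algebra $\cH_{x}$.

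Finally, for axiom (iv), $S|_{\Cc(M)}=\id$ is immediate, while the anti-multiplicativity $S(ab)=S(b)S(a)$ follows from the pointwise formula $S(a)(g)=g\cdot S_{\cH}(a(g^{-1}))$ together with the anti-multiplicativity of the fiber antipode $S_{\cH}$ and the fact that inversion in $G$ reverses products; again the compatibility of the $G$-action with $S_{0}$ is what makes the twist coherent. Axiom (v), the antipode identity $\mu_{A}\com(S\ten\id)\com\cm=\cu\com S$, is then verified on functions supported in a bisection, where it collapses to the corresponding antipode identity for the fiber Hopf algebra $\cH$ paired with the groupoid-level identity for $\Cc(G)$; localization at points of $M$ can again be used to reduce it to the finite-dimensional case. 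Assembling these verifications gives the claim.
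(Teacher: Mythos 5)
Your proposal is correct and follows essentially the same route as the paper: the paper's proof simply states that the axioms are verified by direct computation on functions supported in bisections via standard decompositions $a_{0}a'$, and carries out only the sample case $S(ab)=S(b)S(a)$ using the product formula (\ref{eq_convolution product}) and the compatibility of the twisting isomorphisms $\mu_{V}$ with the fiberwise Hopf algebra structure. Your outline covers the remaining axioms by the same reduction to the structure maps of $\C(M;\cH)$ and $\Cc(G)$ together with localization, which is exactly the intended ``direct computation.''
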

\begin{proof}
The axioms can be verified by direct computations.
Let us show, for example, that
$S(ab)=S(b)S(a)$ holds for any $a,b\in\Cc(G;\cH)$. We can assume
without loss of generality that the function $a$ has support in a bisection $V$,
the function $b$ has support in a bisection $W$ and $s(V)=t(W)$.
Write $a_{0}a'$ and $b_{0}b'$ for standard decompositions of $a$ and $b$ respectively. 
Then
\begin{align*}
S(ab)  & =S\left((a_{0}\ten a')(b_{0}\ten b')\right)=S(a_{0}\mu_{V}(b_{0})\ten a'b')  \\
   & =\mu_{(V\cdot W)^{-1}}\left(S_{0}(a_{0}\cdot\mu_{V}(b_{0}))\right)\ten S_{G}(a'b') \\
   & =\mu_{(V\cdot W)^{-1}}\left(S_{0}(\mu_{V}(b_{0}))S_{0}(a_{0})\right)\ten S_{G}(a'b') \\
   & =\mu_{W^{-1}}\left(\mu_{V^{-1}}(S_{0}(\mu_{V}(b_{0})))\right)\mu_{(V\cdot W)^{-1}}(S_{0}(a_{0}))\ten S_{G}(a'b') \\
   & =\mu_{W^{-1}}(S_{0}(b_{0}))\mu_{W^{-1}}(\mu_{V^{-1}}(S_{0}(a_{0})))\ten S_{G}(b')S_{G}(a') \\
   & =\left(\mu_{W^{-1}}(S_{0}(b_{0}))\ten S_{G}(b')\right)\left(\mu_{V^{-1}}(S_{0}(a_{0}))\ten S_{G}(a')\right) \\
   & =S(b)S(a)\;. \qedhere
\end{align*}
\end{proof}

Note that both $\Cc(G)$ and
$\Cc(M;\cH)=\C(M;\cH)\ten_{\Cc(M)}\Cc(M)$
are Hopf $\Cc(M)$-subalgebroids of $\Cc(G;\cH)$.
The Hopf algebroid $\Cc(G;\cH)$ will be
referred to as the {\em twisted tensor product} of $G$ and $\C(M;\cH)$,
and denoted by
\[
G\ltimes\C(M;\cH)\;.
\]
This is in particular motivated by the following example:

\begin{ex}\rm
The convolution Hopf algebra of an action of a discrete group
$\Gamma$ on a Hopf algebra $H$ is isomorphic to the twisted tensor
product Hopf algebra $\Gamma\ltimes H$.
\end{ex}

Our definition of a Hopf algebroid roughly corresponds to the
notion of a left Hopf algebroid with antipode in
\cite{Bo09,BoSz04,KowPos09}. However, in the unital case where $M$ is compact,
the structure of $G\ltimes\C(M;\cH)$ satisfies axioms of a Hopf
algebroid given in \cite{Bo09,BoSz04,KowPos09}.

\subsection{The Hopf algebroid associated to a semidirect product}
\label{The Hopf algebroid associated to a semidirect product}

Let $B$ be a bundle of connected Lie groups over $M$,
equipped with a left action of an \'{e}tale Lie groupoid $G$ over $M$.
In this Subsection we will define the Hopf $\Cc(M)$-algebroid
associated to the semidirect product Lie groupoid $G\ltimes B$.
The definition is a slight extension of the one in previous Subsection,
since we have to consider the convolution algebra with
coefficients in a bundle of infinite dimensional Hopf algebras,
filtered with subbundles of finite rank.
In the extreme trivial cases, this definition extends the definition
of $\Cc(M)$-Hopf algebroid $\Cc(G)$ associated to $G$ \cite{Mrc07_1}
as well as the definition of $\Cc(M)$-Hopf algebroid of
the Lie algebroid associated to $B$ \cite{MoMrc10}.

Denote by $\fb$ the bundle of Lie algebras associated to $B$.
The universal enveloping algebra $\Univ(\fb)$ of the Lie algebroid $\fb$
over $M$ is not only a $\C(M)/\RR$-bialgebra \cite{MoMrc10},
but also a Hopf $\C(M)$-algebroid and in fact a Hopf
algebra over $\C(M)$. 
To each Lie algebra $\fb_{x}$, the fiber of $\fb$ over
a point $x\in M$, we
naturally assign its universal enveloping algebra $U(\fb_{x})$,
together with its natural filtration
$U(\fb_{x})^{(0)}\subset\cdots\subset U(\fb_{x})^{(k)}\subset
U(\fb_{x})^{(k+1)}\subset\cdots$. The family of vector spaces
\[
\cU(\fb)^{(k)}=\coprod_{x\in M}U(\fb_{x})^{(k)}
\]
can be, for each $k$, equipped with a smooth vector bundle
structure over $M$ (by considering local trivializations obtained
from PBW-theorem). Define a family of (infinite dimensional)
vector spaces over $M$
\[
\cU(\fb)=\lim_{\ra}\cU(\fb)^{(k)}\;,
\]
with fiber over $x$ being the Hopf algebra $U(\fb_{x})$. The space
of smooth sections of $\cU(\fb)$ is defined
as
\[
\C(M;\cU(\fb))=\G(\cU(\fb))=\lim_{\ra}\G(\cU(\fb)^{(k)})\;.
\]
The structure maps on the fibers of $\cU(\fb)$ extend to the structure
maps on the space $\C(M;\cU(\fb))$ and turn $\C(M;\cU(\fb))$ into a Hopf
algebra over $\C(M)$.
Note that there is a natural isomorphism 
$\Univ(\fb)\cong\C(M;\cU(\fb))$
of Hopf algebras over $\C(M)$.

Since the groupoid $G$ acts on $B$, the bundle $\fb$ is
in fact a $G$-bundle of Lie algebras. The action of $G$ on $\fb$
extends to an action on the family of Hopf
algebras $\cU(\fb)$: any arrow $g\in G(x,y)$ induces
an isomorphism of Hopf algebras $U(\fb_{x})\ra U(\fb_{y})$, which,
in particular, preserves the natural filtration.
We define
\[
\Cc(G;\cU(\fb))=\lim_{\longrightarrow}\Gc(t^{*}(\cU(\fb)^{(k)}))\;.
\]
Again, we have a natural isomorphism of left $\Cc(M)$-modules
\[ 
\Cc(G;\cU(\fb))\cong \C(M;\cU(\fb))\ten_{\Cc(M)}\Cc(G) \;.
\]
We equip $\Cc(G;\cU(\fb))$
with multiplication, unit, comultiplication,
counit and antipode as in Proposition \ref{prop_Hopf algebroid},
to obtain the {\em twisted tensor product} Hopf $\Cc(M)$-algebroid
\[
G\ltimes\Univ(\fb)=\Cc(G;\cU(\fb))
\]
associated to the semidirect product groupoid $G\ltimes B$. The
construction of the Hopf $\Cc(M)$-algebroid $G\ltimes\Univ(\fb)$ is
functorial with respect to the smooth functors over $\id_{M}$.

\begin{ex}\rm
(1) Let $G\ltimes B$ be a semidirect product of a discrete group
$G$ and a connected Lie group $B$. The associated Hopf algebroid is in
this case isomorphic to the twisted tensor product Hopf algebra
$G\ltimes U(\fb)$. We consider the Hopf algebra $G\ltimes U(\fb)$
associated to the semidirect product $G\ltimes B$ as a
geometrically constructed Hopf algebra which generalizes both
the group algebras and the universal enveloping algebras.

(2) If $B$ is trivial, then $G\ltimes\Univ(\fb)$ is equal to
$\Cc(G)$.

(3) A $G$-bundle of vector spaces is naturally a $G$-bundle of Lie
algebras with trivial bracket. For such a $G$-bundle of vector spaces
$B$ we obtain the convolution Hopf algebroid $G\ltimes \Simet(B)$,
where $\Simet(B)$ is the symmetric algebra of the module $\G(B)$.
\end{ex}

The Hopf algebroid $G\ltimes\Univ(\fb)$ of a semidirect product
$G\ltimes B$ can be naturally represented by a certain class of
partial differential operators on the groupoid $G\ltimes B$. Note
first that the Lie algebroid associated to $G\ltimes B$ is in fact
equal to the Lie algebroid $\fb$ of $B$ since $G$ is \'{e}tale. We
can therefore consider elements of $\Gc(\fb)$ as right invariant
vector fields on $G\ltimes B$ \cite{MoMrc03,NiWeXu99}. In the same
manner, we assign to any $a\in  \Gc(t^{*}(\fb)) \subset
G\ltimes\Univ(\fb)$ the vector field $X_{a}$ on $G\ltimes B$,
given by
\[
X_{a}(b,g)=dR_{(b,g)}(a(g))\;.
\]
Such a vector field is $B$-invariant by construction and
completely determined by its values on the subgroupoid $G$ of
$G\ltimes B$. The support of the vector field $X_{a}$ is in
general not compact, if the fibers of $B$ are not compact.
However, it makes sense to define the support of a $B$-invariant
vector field on $G\ltimes B$ as a subset of $G$ and not of the
whole $G\ltimes B$. In this way, the vector field $X_{a}$ has
a compact support. By generalizing the above construction, an
arbitrary element of $G\ltimes\Univ(\fb)$ thus corresponds to a
$B$-invariant partial differential operator on $G\ltimes B$ with
compact support.

\begin{ex}\rm
(1) Let $G\ltimes B$ be a semidirect product of a discrete group
$G$ and a connected Lie group $B$. An arbitrary element $D\in
G\ltimes U(\fb)$ can be written as a finite sum $D=\sum_{g\in
G}D_{g}\delta_{g}$, where $D_{g}\in U(\fb)\cong
\text{PDO}_{\inv}(G\ltimes B)$ \cite{NiWeXu99} and
$\delta_{g}$ is a function on $G$ which is equal to $1$ at $g$ and
is $0$ everywhere else. Viewed as a partial differential operator
on $G\ltimes B$, $D$ equals to $D_{g}$ on the connected component
of $G\ltimes B$ corresponding to $g$. It is nonzero only on
finitely many connected components of $G\ltimes B$. Denote now by
$\mathcal{D}'(G\ltimes B)$ the convolution algebra of
distributions with compact support on $G\ltimes B$ \cite{CdSW99}.
We can faithfully represent the algebra $G\ltimes U(\fb)$ into
$\mathcal{D}'(G\ltimes B)$ by assigning to $D_{g}\delta_{g}\in
G\ltimes U(\fb)$ the distribution which corresponds to the
distributional derivative of
$\delta_{(1,g)}\in\mathcal{D}'(G\ltimes B)$ along $D_{g}$.

(2) For a general semidirect product groupoid $G\ltimes B$ we
consider
\[
G\ltimes\Univ(\fb)\cong\text{PDO}_{B\text{-inv},c}(G\ltimes B)
\]
as the space of $B$-invariant partial differential operators on
$G\ltimes B$ with compact support and with convolution product.
\end{ex}

\section{The structure of Hopf algebroids}

The aim of this section is to characterize the convolution Hopf
algebroids of semidirect products of \'{e}tale Lie groupoids and
of bundles of Lie groups.

\subsection{The algebra $\D(A)$}

We start by exploring some properties of the space $\Prim(A)$ of
primitive elements of a Hopf $R$-algebroid
$A$ and its relation to the base algebra
$R$ and the antipode $S$. The well known identity $S(X)=-X$
for $X\in\Prim(A)$ does not hold for general Hopf algebroids.
Concrete counter-examples, with geometric origin, have been
constructed and described in \cite{KowPos09}. In our case,
however, we will be mostly interested in those Hopf algebroids
for which the space of primitive elements is $S$-invariant. Some
of the properties of such algebroids are described in the
following propositions.

\begin{prop}\label{prop_S-invariant subspace of primitive elements}
Let $A$ be a Hopf $R$-algebroid. The following statements are
equivalent:
\begin{enumerate}
\item [(i)] $S(\Prim(A))=\Prim(A)$.

\item [(ii)] $S(\Prim(A))\subseteq\Prim(A)$.

\item [(iii)] For every $X\in\Prim(A)$ we have $S(X)=-X$.
\end{enumerate}
\end{prop}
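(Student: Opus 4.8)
The plan is to establish the cycle of implications (iii) $\Rightarrow$ (i) $\Rightarrow$ (ii) $\Rightarrow$ (iii). The first two are immediate: if $S(X)=-X$ for every $X\in\Prim(A)$, then, since $\Prim(A)$ is closed under negation, $S(\Prim(A))=\Prim(A)$, which is (i), and (i) contains (ii) tautologically. All of the content therefore lies in (ii) $\Rightarrow$ (iii), and this is what I would concentrate on.

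For (ii) $\Rightarrow$ (iii) I would fix $X\in\Prim(A)$ and pass to the auxiliary element $Y=X+S(X)$. Assuming (ii), we have $S(X)\in\Prim(A)$, so $Y\in\Prim(A)$, and since $S$ is an involution, $S(Y)=S(X)+X=Y$; thus $Y$ is an $S$-invariant primitive element. I would then prove the key claim that every $S$-invariant primitive element vanishes, for this immediately gives $S(X)=-X$.

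To prove the claim, write $\cm(Y)=\eta\ten Y+Y\ten\eta$ with $\eta Y=Y\eta=Y$, and apply the antipode axiom (v), namely $\mu_{A}\com(S\ten\id)\com\cm=\cu\com S$, to $Y$. This yields
\[
S(\eta)Y+S(Y)\eta=\cu(S(Y))\;.
\]
Here the right-hand side is $\cu(Y)=0$, since $\cu$ vanishes on $\Prim(A)$ and $S(Y)=Y$. In the second summand on the left, $S(Y)\eta=Y\eta=Y$. For the first summand, applying $S$ to $Y\eta=Y$ and using that $S$ is an anti-homomorphism gives $S(\eta)S(Y)=S(Y)$, which, since $S(Y)=Y$, reads $S(\eta)Y=Y$. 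Substituting both, the identity becomes $Y+Y=0$, hence $2Y=0$ and therefore $Y=0$.

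The step I expect to be the crux --- and the only place the Hopf algebroid setting genuinely differs from the classical Hopf algebra case --- is handling the local unit $\eta$ appearing in the primitivity relation, which is not a global unit, so a priori neither summand produced by axiom (v) need simplify. The decisive observation is that the $S$-invariance $S(Y)=Y$ forces $S(\eta)$ to be a local unit for $Y$ as well, which is exactly what collapses both terms and produces $2Y=0$; here I use that $2$ is invertible in $k$, as it is in all cases of interest, in particular for $k=\RR$. Applying $Y=0$ to $Y=X+S(X)$ then gives $S(X)=-X$ and closes the cycle.
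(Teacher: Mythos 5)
Your argument is correct in spirit and rests on the same key identity as the paper's proof, namely axiom (v), $\mu_{A}\com(S\ten\id)\com\cm=\cu\com S$, combined with the fact that $\cu$ vanishes on $\Prim(A)$. The difference is the detour through the symmetrized element $Y=X+S(X)$: the paper applies axiom (v) directly to $X$ itself, obtaining $S(X)+X=\cu(S(X))$, and then uses hypothesis (ii) only to conclude that $S(X)\in\Prim(A)$ and hence $\cu(S(X))=0$, which gives $S(X)=-X$ at once. Your symmetrization buys you one thing --- the relation $S(Y)=Y$ lets you verify cleanly that $S(\eta)$ acts as a local unit on $Y$ from the left, a point the paper passes over silently --- but it costs you generality: you end up with $2Y=0$ and must invert $2$, whereas the proposition is stated for a Hopf $R$-algebroid over an arbitrary field $k$, and the direct route proves it in characteristic $2$ as well (there your argument establishes only the vacuous identity $2Y=0$, even though the conclusion $S(X)=-X=X$ still holds). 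So I would record your proof as correct under the additional hypothesis $\mathrm{char}\,k\neq 2$, and note that applying axiom (v) to $X$ rather than to $X+S(X)$ removes that hypothesis; the local-unit manipulations you carry out for $Y$ work just as well for $X$ once one takes the local unit $\eta$ in $R$, where $S(\eta)=\eta$ by axiom (iv).
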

\begin{proof}
The implications (i)$\Rightarrow$(ii) and (iii)$\Rightarrow$(i)
are immediate. Suppose now that $S(\Prim(A))\subseteq\Prim(A)$. To
show (iii), we use the equality
$\mu_{A}\com(S\ten\id)\com\cm=\cu\com S$ on an element
$X\in\Prim(A)$ and obtain
\[
S(X)+X=\cu(S(X))\;.
\]
Since $\Prim(A)$ is $S$-invariant, we have $S(X)\in\Prim(A)$,
which implies $\cu(S(X))=0$.
\end{proof}

\begin{prop}\label{prop_Hopf algebroids with trivial anchor}
Let $A$ be a Hopf $R$-algebroid. The following statements are
equivalent:
\begin{enumerate}
\item [(i)] Elements of $\Prim(A)$ commute with elements of $R$.

\item [(ii)] The space $\Prim(A)$ is a right $R$-submodule of $A$.

\item [(iii)] The $(k,R)$-Lie algebra $\Prim(A)$ has trivial anchor.
\end{enumerate}
\end{prop}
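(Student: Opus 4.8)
The plan is to prove the cyclic chain of implications (i)$\Rightarrow$(ii)$\Rightarrow$(iii)$\Rightarrow$(i), working entirely at the level of the defining axioms of a Hopf $R$-algebroid, since primitivity, the anchor, and the $(k,R)$-Lie algebra structure on $\Prim(A)$ are all expressed through $\cm$, $\cu$ and the $R$-module structure. Throughout I will use that for $X\in\Prim(A)$ there is an idempotent-like element $\eta$ with $\cm(X)=\eta\ten X+X\ten\eta$ and $\eta X=X\eta=X$, together with the fact that $\cu(X)=0$ noted in the preliminaries.

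First I would establish (i)$\Rightarrow$(ii). Fix $X\in\Prim(A)$ and $r\in R$. The claim is that $Xr\in\Prim(A)$, i.e.\ that right multiplication by $r$ preserves primitivity. The natural tool is to compute $\cm(Xr)$ using the multiplicativity axiom (iii), $\cm(Xr)=\cm(X)\cm(r)$, together with axiom (i) which identifies $\cm|_R$ with the canonical embedding of $R$ into $A\ten_R A$. Expanding $\cm(X)\cm(r)=(\eta\ten X+X\ten\eta)\cm(r)$ and using the commutativity hypothesis (i) to move $r$ past $X$, I expect the product to collapse to the form $\eta'\ten Xr+Xr\ten\eta'$ for a suitable unit-like $\eta'$, which is exactly primitivity of $Xr$. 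The role of the hypothesis is precisely to guarantee that the two right $R$-actions in $A\overline{\ten}_R A$ (axiom (ii)) are handled consistently, so that $r$ does not get split in an uncontrolled way across the tensor factors.

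The implication (ii)$\Rightarrow$(iii) is where I unwind the definition of the anchor. Recall $\rho(a)(r)=\cu(ar)$, so the anchor of $X\in\Prim(A)$ acting on $r$ is $\cu(Xr)$. If $\Prim(A)$ is a right $R$-submodule, then $Xr\in\Prim(A)$, and since the counit vanishes on all primitive elements we get $\rho(X)(r)=\cu(Xr)=0$ for every $r$, i.e.\ the anchor is trivial. Conversely, for (iii)$\Rightarrow$(i), triviality of the anchor means $\cu(Xr)=0$ for all $r$; I would then compare $Xr$ and $rX$ by computing the difference of their comultiplications, or more directly use that the anchor measures the failure of $X$ to commute with $R$. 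The key computation here is the standard derivation identity: for $X$ primitive one expects a Leibniz-type relation $Xr-rX=\rho(X)(r)\cdot(\text{unit})$ forced by applying $\cu$ and $\cm$ to $Xr$ and comparing with the coalgebra axioms, so that vanishing of $\rho(X)$ yields $Xr=rX$.

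The main obstacle I anticipate is bookkeeping the two distinct right $R$-actions on $A\ten_R A$ and the non-centrality of $R$ in $A$, which are exactly the features distinguishing Hopf algebroids from ordinary Hopf algebras and which make the naive manipulations of $\cm(Xr)=\cm(X)\cm(r)$ delicate. In particular, establishing the Leibniz identity $Xr-rX=\rho(X)(r)$ cleanly—rather than modulo some correction living in $R$—requires careful use of axiom (ii) (that $\cm(A)$ lands in $A\overline{\ten}_R A$) and of the local units of $A$ in $R$. I would isolate this identity as the technical heart of the argument, prove it once, and then read off all three equivalences as easy consequences.
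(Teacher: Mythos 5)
Your proposal is correct and follows essentially the same route as the paper: the heart of both arguments is the Leibniz identity $Xr=rX+\cu(Xr)$, obtained by expanding $\cm(Xr)=\cm(X)\cm(r)$ and applying $\id\ten\cu$, together with the observation that $\cu$ vanishes on primitives for (ii)$\Rightarrow$(iii). The only (harmless) difference is that for (i)$\Rightarrow$(ii) the paper skips the comultiplication computation entirely, noting that $\Prim(A)$ is already a left $R$-submodule, so commutativity immediately makes it a right one.
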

\begin{proof}
(i)$\Rightarrow$(ii) $\Prim(A)$ is always a left
$R$-submodule of $A$, hence in this case it is also a right
$R$-submodule.

(ii)$\Rightarrow$(iii) Take any $r\in R$ and any
$X\in\Prim(A)$. From $Xr\in\Prim(A)$ it follows $\cu(Xr)=0$ and
therefore $\rho(X)(r)=\cu(Xr)=0$.

(iii)$\Rightarrow$(i) Take any $r\in R$, any
$X\in\Prim(A)$ and let $\eta\in R$ be a local unit for both $r$ and $X$. Then
\[
\cm(Xr)=\cm(X)\cm(r)=(X\ten\eta+\eta\ten X)(\eta\ten r)=X\ten r+\eta\ten Xr\;.
\]
By applying the map $\id\ten\cu$ on both sides we obtain
\[
Xr=rX+\cu(Xr)\;.
\]
If the anchor is trivial, we therefore have $rX=Xr$.
\end{proof}

The space of primitive elements $\Prim(A)$ is in general not a Lie
algebra over $R$. This is true, however, if the anchor of $\Prim(A)$
is trivial, as follows from Proposition
\ref{prop_Hopf algebroids with trivial anchor}.

\begin{prop}
Let $A$ be a Hopf $R$-algebroid. If $\Prim(A)$ is
$S$-invariant, then the $(k,R)$-Lie algebra  $\Prim(A)$ has
trivial anchor.
\end{prop}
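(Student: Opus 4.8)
The plan is to deduce the statement from the two preceding propositions rather than to compute the anchor directly. By Proposition~\ref{prop_S-invariant subspace of primitive elements}, the hypothesis that $\Prim(A)$ is $S$-invariant is equivalent to the identity $S(X)=-X$ for every $X\in\Prim(A)$; and by Proposition~\ref{prop_Hopf algebroids with trivial anchor}, triviality of the anchor is equivalent to $\Prim(A)$ being a right $R$-submodule of $A$. So it suffices to show that $Xr\in\Prim(A)$ whenever $X\in\Prim(A)$ and $r\in R$.

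The key observation is that the antipode converts right multiplication by $R$ into left multiplication. Indeed, for $X\in\Prim(A)$ and $r\in R$, using that $S$ is anti-multiplicative and restricts to the identity on $R$ (axiom~(iv)) together with $S(X)=-X$, I would compute
\[
S(Xr)=S(r)S(X)=r\,S(X)=-rX\;.
\]
Since $\Prim(A)$ is a left $R$-module, $rX\in\Prim(A)$, and hence $S(Xr)=-rX\in\Prim(A)$.

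To finish, I would invoke $S$-invariance once more: because $S$ is an involution with $S(\Prim(A))=\Prim(A)$, the condition $S(Xr)\in\Prim(A)$ forces $Xr=S(S(Xr))\in S(\Prim(A))=\Prim(A)$. Thus $\Prim(A)$ is closed under right multiplication by $R$, i.e.\ it is a right $R$-submodule, and Proposition~\ref{prop_Hopf algebroids with trivial anchor} then yields that the anchor of $\Prim(A)$ is trivial.

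The only real subtlety, and the step I would guard most carefully, is this last one: a naive attempt to prove $\cu(Xr)=0$ directly from the counit and antipode axioms tends to be circular, since the identity $\cu(rX)=0$ already holds in any Hopf $R$-algebroid (it follows from $Xr-rX=\cu(Xr)$ and $\cu|_{R}=\id$) and so carries no information about $\cu(Xr)$. The essential use of the hypothesis is therefore not an elementwise counit computation but the global fact that $S$ interchanges the left and right $R$-actions on primitives up to sign, which is exactly what lets $S$-invariance upgrade the automatic left $R$-module structure of $\Prim(A)$ to a right $R$-module structure.
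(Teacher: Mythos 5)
Your proof is correct and follows essentially the same route as the paper: apply the involutive antipode twice to $Xr$, use anti-multiplicativity and $S|_{R}=\id$ to get $Xr=S(rS(X))$, and conclude from $S$-invariance and the left $R$-module structure that $Xr\in\Prim(A)$, hence the anchor is trivial by the preceding proposition. The extra simplification $S(X)=-X$ is harmless but not needed.
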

\begin{proof}
Choose any $r\in R$ and any $X\in\Prim(A)$. We have to show
$Xr\in\Prim(A)$. First note that
\[
Xr=S(S(Xr))=S(rS(X))\;.
\]
Since $\Prim(A)$ is $S$-invariant and a left $R$-module, we
get $S(rS(X))\in\Prim(A)$.
\end{proof}

We will assume from now on that $M$ is a Hausdorff manifold and that $A$ is a 
Hopf $\Cc(M)$-algebroid. Part of the structure of a Hopf algebroid $A$ is its anchor, 
which defines an action of $A$ on the algebra $\Cc(M)$. In this respect, two classes
of elements of $A$ will be of particular interest for us.
Primitive elements of $A$ act on $\Cc(M)$ by derivations (which
correspond to vector fields on $M$). It is therefore convenient to
consider the subalgebra $\D(A)$ of $A$, generated by $\Cc(M)$ and
$\Prim(A)$, which acts on $\Cc(M)$ by partial differential
operators. Under some mild assumptions we can identify $\D(A)$
with the universal enveloping algebra $\Univ(\Cc(M),\Prim(A))$ of
the $(\RR,\Cc(M))$-Lie algebra $\Prim(A)$
(see Proposition \ref{prop_injective map from U(P(A))}).

Another important subset $G^{S}_{w}(A)$ of $A$ consists of
$S$-invariant weakly grouplike elements. The action of such an element
$a$ on $\Cc(M)$ was studied in \cite{Mrc07_1},
where it was used to define the spectral \'{e}tale Lie groupoid associated to $A$.
Note that the anchor $\rho(a)$ corresponds to the operator
$T_{S(a)}$ (see \cite{Mrc07_1}) given by
\[
\rho(a)(f)=T_{S(a)}(f)=\cu(af)
\]
for any $f\in\Cc(M)$. Alternatively, one can
equivalently define $\rho(a)(f)=afS(a')$, where $a'$ is any
element of $A$
such that $\cm(a)=a\ten a'$ and $\cm(S(a))=S(a')\ten S(a)$.
By using this definition, the operator $\rho(a)$ can
be extended to the whole $A$.

We say that a pair of elements $a,a'\in G^{S}_{w}(A)$ is
a {\em good pair}, if there exist an element $c\in G^{S}_{w}(A)$,
normalized on an open subset $U$ of $M$, and functions
$f,f'\in\Cc(U)$ such that $a=fc$, $a'=f'c$ and
$f'$ equals $1$ on an open neighbourhood of the support of $f$.
Such an element $c$ will be called a {\em witness} for
the good pair $a,a'$.
Observe that $f=\cu(a)$, $f'=\cu(a')$, $\cm(a)=a\ten a'=a\ten c$,
$\cm(S(a))=S(a')\ten S(a)=S(c)\ten S(a)$,
$\cm(a')=a'\ten c$,
$\cm(S(a'))=S(c)\ten S(a')$,
$f=aS(a')=aS(c)$ and $f'=a'S(c)$ (see \cite{Mrc07_1}).
We say that an element $d\in G^{S}_{w}(A)$ is {\em good},
if there exists  $d'\in G^{S}_{w}(A)$ such that
$d,d'$ is a good pair.

For every good pair $a,a'\in G^{S}_{w}(A)$ we define an $\RR$-linear
operator $T_{a,a'}\!:A\ra A$ by
\[
T_{a,a'}(b)=abS(a')
\]
for any $b\in A$. Its restriction to $\Cc(M)$ clearly equals
$\rho(a)$.

\begin{prop}\label{prop_invariance of cP(A)}
Let $A$ be a Hopf $\Cc(M)$-algebroid and let $a,a'\in G^{S}_{w}(A)$
be a good pair.
\begin{enumerate}
\item [(i)] The base subalgebra $\Cc(M)$ of $A$ is $T_{a,a'}$-invariant.

\item [(ii)] The $(\RR,\Cc(M))$-Lie algebra $\Prim(A)$ is $T_{a,a'}$-invariant.

\item [(iii)] The subalgebra $\D(A)$ of $A$ is $T_{a,a'}$-invariant.
\end{enumerate}
\end{prop}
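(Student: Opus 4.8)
The plan is to handle the three parts in order, with (ii) carrying the computational weight and (iii) reduced to (i) and (ii). Throughout I fix a witness $c\in G^{S}_{w}(A)$ for the good pair, normalized on an open set $U$, with $a=fc$, $a'=f'c$ and $f'$ equal to $1$ near $\supp(f)$, and I record the witness identities already observed before the statement, in particular $\cm(a)=a\ten c$, $\cm(S(a'))=S(c)\ten S(a')$, $aS(c)=f$, $S(a')=S(c)f'$, and $cS(c)=\cu(c)$ (a local unit equal to $1$ on $U$). Part (i) is then immediate: $T_{a,a'}|_{\Cc(M)}=\rho(a)$, and the anchor $\rho(a)$ lies in $\mathrm{End}_{\RR}(\Cc(M))$ by definition, so $T_{a,a'}(\Cc(M))=\rho(a)(\Cc(M))\subseteq\Cc(M)$.

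For (ii), take $X\in\Prim(A)$ with $\cm(X)=\eta\ten X+X\ten\eta$, where I choose $\eta\in\Cc(M)$ to be a common local unit for $a,a',c,f,f',X$ (equal to $1$ on all the relevant compact supports). Using that $\cm$ is multiplicative and the witness identities, I would expand
\[
\cm(aXS(a'))=(a\ten c)(\eta\ten X+X\ten\eta)(S(c)\ten S(a'))
\]
and collect the result as $aS(c)\ten cXS(a')+aXS(c)\ten cS(a')$. Writing $\Phi(X)=cXS(c)$ and simplifying the factors via $aS(c)=f$, $cS(a')=cS(c)f'=f'$ (since $cS(c)=\cu(c)=1$ on $U\supseteq\supp(f')$), $aXS(c)=f\Phi(X)$ and $cXS(a')=\Phi(X)f'$, this becomes $f\ten\Phi(X)f'+f\Phi(X)\ten f'$. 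Finally, setting $Y=aXS(a')=f\Phi(X)f'$ and moving the functions across the balanced tensor (the right $\Cc(M)$-action on the first leg agreeing with the left action on the second), I would rewrite the two terms as $\eta\ten Y$ and $Y\ten\eta$, exhibiting $Y$ as primitive.

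For (iii) I would factor $T_{a,a'}$ through conjugation by the witness, $T_{a,a'}(b)=f\,\Phi(b)\,f'$ with $\Phi(b)=cbS(c)$. Parts (i) and (ii) give $\Phi(\Cc(M))\subseteq\Cc(M)$, since $\Phi|_{\Cc(M)}=\rho(c)$, and $\Phi(\Prim(A))\subseteq\Prim(A)$, by the same (indeed simpler) comultiplication computation as in (ii); note $\D(A)$ is generated as an algebra by $\Cc(M)$ and $\Prim(A)$. It then remains to see that $\Phi$ preserves $\D(A)$: expanding a product of generators $g_{1}\cdots g_{m}$ with $g_{i}\in\Cc(M)\cup\Prim(A)$ and inserting the local unit $S(c)c=\cu(S(c))$ between consecutive factors rewrites $cg_{1}\cdots g_{m}S(c)$ as $\Phi(g_{1})\cdots\Phi(g_{m})$. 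Each $\Phi(g_{i})\in\D(A)$ and $f,f'\in\Cc(M)\subseteq\D(A)$, so $T_{a,a'}(g_{1}\cdots g_{m})=f\,\Phi(g_{1})\cdots\Phi(g_{m})\,f'\in\D(A)$, and (iii) follows by linearity.

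The main obstacle I anticipate is the non-unital bookkeeping: justifying that the local units $\eta$, $cS(c)=\cu(c)$ and $S(c)c=\cu(S(c))$ genuinely act as identities in each product and in each tensor leg. In (ii) this is what legitimizes passing from the expanded comultiplication to the primitive form $\eta\ten Y+Y\ten\eta$, and in (iii) it is exactly what validates the $S(c)c$-insertions making $\Phi$ multiplicative on $\D(A)$. I would control this either by fixing $\eta$ once and for all adapted to the (compact) supports at hand, or, if the global manipulation proves awkward, by localizing at each point $x\in M$: there $c_{x}$ becomes grouplike in $A_{x}$, conjugation by a grouplike element is a classical algebra automorphism preserving primitive elements, and an identity between elements of a locally unital $\Cc(M)$-module holds globally once it holds in every localization.
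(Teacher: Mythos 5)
Part (i) is fine and agrees with the paper. The trouble begins at the last step of your part (ii). After the expansion you are left with the two terms $f\ten\Phi(X)f'$ and $f\Phi(X)\ten f'$, where $\Phi(X)=cXS(c)$ and $Y=f\Phi(X)f'$. The first term does become $\eta\ten Y$ by transferring the left multiplication by $f$; but the second does not become $Y\ten\eta$ by any tensor manipulation. In $A\ten^{ll}_{\Cc(M)}A$ only \emph{left} multiplications by functions can be moved between the legs, so $f\Phi(X)\ten f'=f'f\Phi(X)\ten\eta=f\Phi(X)\ten\eta$, and you still owe the identity $f\Phi(X)=f\Phi(X)f'$, i.e.\ $aXS(c)=aXS(a')$. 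This is not ``non-unital bookkeeping'': since $X$ does not commute with functions ($X\phi=\phi X+\rho(X)(\phi)$), pushing $f'$ through $X$ creates the derivation term $\rho(X)(f'\com\tau_{U,c})$, and one must invoke the defining property of a good pair --- $f'\equiv1$ on a neighbourhood of $\supp f$ --- to see that this term is annihilated by the outer factor $f$. The paper spends the entire second half of its proof of (ii) on precisely this point; your proposal silently skips it. (A smaller slip: $cS(c)=\cu(c)^{2}$, not $\cu(c)$; harmless here since both equal $1$ on $U$, but it already signals that $c$ behaves like a grouplike element only over $U$.)

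The same phenomenon defeats your part (iii) outright. $S(c)c$ is not a unit of $A$: at best it is a function equal to $1$ only on $V_{U,c}$, so inserting it between arbitrary generators changes the product and $\Phi$ is \emph{not} multiplicative on $\D(A)$. Worse, $\Phi(X)=cXS(c)$ need not be primitive at all: repeating your comultiplication computation with the single element $c$ leaves a term $cXS(c')\ten\cu(c)$ which can only be converted to $cXS(c)\ten\eta$ by commuting the bump function $\cu(c)$ past $X$; this produces a derivation term $\rho(X)(\cu(c)\com\tau_{U,c})$ that does not vanish because $\cu(c)$ is not locally constant on its support, and here there is no second cutoff to kill it. This is exactly why the paper never conjugates by a single witness: it proves (iii) by induction on the word length of $X_1\cdots X_k\phi$, introducing a third element $a''=f''c$ forming good pairs with both $a$ and $a'$ and rewriting $T_{a,a'}(X_1\cdots X_k\phi)$ as $T_{a,a''}(X_1\cdots X_{k-1})\,T_{a',a''}(\phi X_k)$ plus a lower-order term, so that every conjugation occurring is again by a good pair. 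Your localization fallback does not rescue this either: left multiplication by $a$ sends $A_x$ to $A_{\tau_{U,c}(x)}$, so $T_{a,a'}$ is not an endomorphism of any single localization, and $c_x$ is grouplike only at the points where $c$ is normalized.
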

\begin{proof}
(i) This follows from the equality
$T_{a,a'}(f)=\cu(af)$, which holds for any $f\in\Cc(M)$.

(ii) Let $c$ be a witness for the good pair $a,a'$.
By definition, there exists an open subset $U$ of $M$ such that
$c$ is normalized on $U$ and $\cu(a),\cu(a')\in\Cc(U)$.
Write $f=\cu(a)$ and $f'=\cu(a')$.
Choose any $X\in\Prim(A)$ and let $\eta\in\Cc(M)$ be a local unit
for $X$, $T_{a,a'}(X)$, $c$, $f$ and $f'$. It follows that
\begin{align*}
\cm(T_{a,a'}(X))& =\cm(aXS(a'))\\
                & =\cm(fc)\cm(X)\cm(S(f'c))  \\
                & =(fc\ten f'c)(X\ten\eta+\eta\ten X)(S(c)\ten S(f'c)) \\
                & = fcXS(c)\ten f'cS(f'c)+fcS(c)\ten f'cXS(f'c) \\
                & = f'cXS(c)\ten fcS(f'c)+f'cS(c)\ten fcXS(f'c) \\
                & = f'cXS(c)\ten f+f'\ten fcXS(f'c) \\
                & = ff'cXS(c)\ten \eta + \eta\ten f'fcXS(f'c) \\
                & = aXS(c)\ten\eta +\eta\ten aXS(a')\;.
\end{align*}
The second term in the last line is equal to $\eta\ten T_{a,a'}(X)$.
We need to see that the first term in the last line equals
$T_{a,a'}(X)\ten \eta$. Indeed, we have
\begin{align*}
aXS(a')  &=fcXS(f'c)  \\
         &=fcXS(c(f'\com\tau_{U,c})) \\
         &=fcX(f'\com\tau_{U,c})S(c)  \\
         &=fc \big( (f'\com\tau_{U,c})X + \rho(X)(f'\com\tau_{U,c}) \big) S(c)  \\
         &=fc (f'\com\tau_{U,c})X S(c) + fc \rho(X)(f'\com\tau_{U,c}) S(c)  \\
         &=ff'cXS(c) + c(f\com\tau_{U,c})\rho(X)(f'\com\tau_{U,c})S(c)  \\
         &=aXS(c)+ c(f\com\tau_{U,c})\rho(X)(f'\com\tau_{U,c})S(c)\;.
\end{align*}
From the fact that $\rho(X)$ is a derivation on $\Cc(M)$ and from the equality
$ff'=f$ it follows that
$(f\com\tau_{U,c})\rho(X)(f'\com\tau_{U,c})=0$,
thus $T_{a,a'}(X)=aXS(a')=aXS(c)$.

(iii) 
Choose a function $f''\in\Cc(U)$ which equals $1$ on
an open neighbourhood $W''$ of the support of $f'$, and put $a''=f''c$.
Note that $a''$ is another witness for the good pair $a,a'$ and
that both $a,a''$ and $a',a''$ are good pairs with witness $c$.
Write $\tau=\tau_{W'',a''}$.

The vector space $\D(A)$ is generated
by elements of the form $X_{1}X_{2}\ldots X_{k}\phi$, where
$X_{i}\in\Prim(A)$ and $\phi\in\Cc(M)$.
It is therefore enough to show
that $T_{a,a'}$ maps all such elements into $\D(A)$.

We will prove this by induction on $k$. For $k=0$ this is true by (i).
Now assume that $T_{d,d'}(X_{1}X_{2}\cdots X_{k-1}\phi)\in \D(A)$ 
for any good pair $d,d'\in G_{w}^{S}(A)$ and
any $\phi\in\Cc(M)$. We only have to show that, under this
induction hypothesis, we have $T_{a,a'}(X_{1}X_{2}\cdots X_{k-1}X_{k}\phi)\in\D(A)$
for any $\phi\in\Cc(M)$.
To this end, note that the equalities
$a'=\cu(a')a''=a''(\cu(a')\com\tau)$
and $\cu(a')\com\tau=S(a'')a'$
imply
\begin{align*}
T_{a,a'}(X_{1}X_{2}\cdots X_{k}\phi) &= a X_{1}X_{2}\cdots X_{k}\phi S(a') \\
     &= aX_{1}X_{2}\cdots X_{k}\phi(\cu(a')\com\tau)S(a'')  \\
     &= aX_{1}X_{2}\cdots X_{k-1}\phi(\cu(a')\com\tau)X_{k}S(a'') \\
     & \quad + aX_{1}X_{2}\cdots X_{k-1} \rho(X_{k})(\phi(\cu(a')\com\tau)) S(a'') \\
     &= aX_{1}X_{2}\cdots X_{k-1}S(a'')a'\phi X_{k}S(a'') \\
     & \quad + aX_{1}X_{2}\cdots X_{k-1} \rho(X_{k})(\phi(\cu(a')\com\tau)) S(a'') \\
     &=T_{a,a''}(X_{1}X_{2}\cdots X_{k-1}) T_{a',a''}(\phi X_{k}) \\
     &\quad +T_{a,a''}(X_{1}X_{2}\cdots X_{k-1} \rho(X_{k})(\phi(\cu(a')\com\tau)))\;.
\end{align*}
The result now follows by (i) and the induction hypothesis.
\end{proof}

In general, $T_{a,a'}$ is not a multiplicative map. For our
purposes, it will suffice that $T_{a,a'}$ is in a certain
sense locally multiplicative.

For any $\Cc(M)$-submodule $B$ of $A$ and any open subset $U$ of $M$, define
\[
B|_{U}=\{ fb\,|\, f\in\Cc(U),\,b\in B \}\;.
\]
Note that an element $b\in B$ belongs to $B|_{U}$
if and only if there exists $f\in\Cc(U)$ such
that $fb=b$. Observe also that
$A|_{U}$ and $\D(A)|_{U}$ are subalgebras of $A$, while
$\Prim(A)|_{U}$ is a $(\RR,\Cc(M))$-Lie subalgebra of $\Prim(A)$.

The algebra $\D(A)|_{U}$ is generated by
$\Cc(U)$ and $\Prim(A)|_{U}$ together. Commutation relations between
$\Cc(U)$ and $\Prim(A)|_{U}$ show that for any $D\in\D(A)|_{U}$
there exists $f\in\Cc(U)$ such that $fD=Df=D$. Moreover, any
$\eta\in\Cc(M)$ which equals $1$ on $U$ acts as a two-sided unit on
$\D(A)|_{U}$. 

Let $a\in G^{S}_{w}(A)$ be good, normalized
on an open subset $W$ of $M$,
and let $\tau_{W,a}\!:V_{W,a}\ra W$ be the
corresponding diffeomorphism.
Choose $a'\in G^{S}_{w}(A)$ so that $a,a'$ is a good pair.
In particular, the element $a'$ is normalized on an open
neighbourhood $W'$ of the support of $\cu(a)$, while
$\cu(a)\com\tau_{W',a'}$ equals to $1$ on $V_{W,a}$. 
For any $D\in\D(A)|_{V_{W,a}}$ we have
\begin{align*}
T_{a,a'}(D) & =aDS(a')=aD(\cu(a)\com\tau_{W',a'})S(a')=aDS(a) \\
            & =\cu(a)a'DS(a)=a'(\cu(a)\com\tau_{W',a'})DS(a)=a'DS(a)\;.
\end{align*}
In particular,
the restriction of $T_{a,a'}$ to $\D(A)|_{V_{W,a}}$ depends only
on $W$ and $a$, and not on the choice of $a'$.
We will therefore denote this restriction by $T_{W,a}$.

\begin{prop}\label{prop_isomorphism T}
Let $A$ be a Hopf $\Cc(M)$-algebroid and let
$a\in G^{S}_{w}(A)$ be good, normalized on an open subset
$W$ of $M$. The map $T_{W,a}$
restricts to
\begin{enumerate}
\item [(i)] the algebra isomorphism
$(\tau_{W,a}^{-1})^{*}\!:\Cc(V_{W,a})\ra\Cc(W)$,

\item [(ii)] an isomorphism of Lie algebras
$\Prim(A)|_{V_{W,a}}\ra\Prim(A)|_{W}$, and

\item [(iii)] an isomorphism of algebras
$\D(A)|_{V_{W,a}}\ra\D(A)|_{W}$.
\end{enumerate}
\end{prop}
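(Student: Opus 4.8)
The plan is to dispatch (i) by a direct computation and then to obtain (ii) and (iii) together, the multiplicativity of $T_{W,a}$ being the only substantial point.

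For (i), recall that on $\Cc(M)$ the map $T_{W,a}$ agrees with $\rho(a)$, so for $f\in\Cc(V_{W,a})$ one has $T_{W,a}(f)=afS(a')$. Rewriting the defining relation of $\tau_{W,a}$ as $af=(f\com\tau_{W,a}^{-1})a$ for $f\in\Cc(V_{W,a})$, and using $aS(a')=\cu(a)$ together with the normalization $\cu(a)=1$ on $W$, I would get $T_{W,a}(f)=(f\com\tau_{W,a}^{-1})\cu(a)=f\com\tau_{W,a}^{-1}$, since $f\com\tau_{W,a}^{-1}$ is supported in $W$. Thus $T_{W,a}|_{\Cc(V_{W,a})}=(\tau_{W,a}^{-1})^{*}$, which is an algebra isomorphism onto $\Cc(W)$ because $\tau_{W,a}$ is a diffeomorphism.

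For (ii) and (iii), I would first check that the image lands in the right place. By Proposition \ref{prop_invariance of cP(A)}, $T_{W,a}$ carries $\Prim(A)|_{V_{W,a}}$ into $\Prim(A)$ and $\D(A)|_{V_{W,a}}$ into $\D(A)$. To see that it lands over $W$, pick for $D\in\D(A)|_{V_{W,a}}$ a function $e\in\Cc(V_{W,a})$ with $eDe=D$ and compute, using $ae=(e\com\tau_{W,a}^{-1})a$ and $eS(a)=S(a)(e\com\tau_{W,a}^{-1})$,
\[
T_{W,a}(D)=a(eDe)S(a)=\bar{e}\,T_{W,a}(D)\,\bar{e},\qquad \bar{e}:=e\com\tau_{W,a}^{-1}\in\Cc(W),
\]
from which $T_{W,a}(D)\in\D(A)|_{W}$ follows, the same manipulation keeping primitive elements inside $\Prim(A)|_{W}$. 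Next I would note that (ii) reduces to (iii): once $T_{W,a}$ is an algebra homomorphism on $\D(A)|_{V_{W,a}}$, it sends a commutator $[X,Y]$ of primitive elements to $[T_{W,a}(X),T_{W,a}(Y)]$, hence is a homomorphism of Lie algebras on $\Prim(A)|_{V_{W,a}}$.

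It remains, for (iii), to show that $T_{W,a}\colon\D(A)|_{V_{W,a}}\ra\D(A)|_{W}$ is a bijective algebra homomorphism, and I would derive everything from two reproducing identities: that $S(a)a$ acts as a two-sided unit on $\D(A)|_{V_{W,a}}$ and $aS(a)$ as a two-sided unit on $\D(A)|_{W}$. Granting these, multiplicativity is formal, $T_{W,a}(D_{1})T_{W,a}(D_{2})=aD_{1}(S(a)a)D_{2}S(a)=aD_{1}D_{2}S(a)=T_{W,a}(D_{1}D_{2})$, and the analogous map $D'\mapsto S(a)D'a$ associated with $S(a)$ is a two-sided inverse by the same cancellation, giving bijectivity. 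The reproducing identities themselves I would reduce to the claim that $S(a)a$ and $aS(a)$ lie in $\Cc(M)$ and equal $1$ on $V_{W,a}$ and $W$ respectively; this I would prove by localization, since for $y\in W$ the germ $a_{y}$ is a grouplike arrow of $A_{y}$ and for $x\in V_{W,a}$ the germ $S(a)_{x}$ is a grouplike arrow of $A_{x}$, so that $a_{y}S(a_{y})=\cu(a_{y})=1$ and $S(a)_{x}a_{x}=\cu(S(a)_{x})=1$, while the localization principle recalled in the preliminaries (an element of a $\Cc(M)$-module vanishes exactly when all its germs do) assembles these pointwise values into the global statements.

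The main obstacle is precisely this last step. Because $\Cc(M)$ is not central in $A$ and left multiplication by $a$ transports supports through $\tau_{W,a}$, one cannot localize the products $aDS(a)$ and $S(a)aD$ factorwise at a fixed point, so the identification of $S(a)a$ and $aS(a)$ with base functions must be argued carefully: one has to verify that composing the grouplike arrow $a_{y}$ from $x=\tau_{W,a}^{-1}(y)$ to $y$ with its inverse yields a unit arrow, i.e.\ that $aS(a)$ and $S(a)a$ are genuine elements of the base algebra. Once this transport of germs along the arrow $a$ is under control, so that left multiplication by $S(a)a$ and $aS(a)$ becomes an honest scalar action of $\Cc(M)$, both the multiplicativity of $T_{W,a}$ and the construction of its inverse are immediate.
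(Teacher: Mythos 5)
Your overall strategy coincides with the paper's: establish (i) by the commutation relation, check that $T_{W,a}$ maps $\D(A)|_{V_{W,a}}$ into $\D(A)|_{W}$ via a support function, derive multiplicativity from a ``reproducing'' base function, and invert by means of $S(a)$. The difficulty is that you have isolated the one step on which everything rests --- that $S(a)a$ and $aS(a)$ lie in $\Cc(M)$ and equal $1$ on $V_{W,a}$ and $W$ respectively --- and then left it unproven, correctly observing yourself that the localization argument you sketch does not apply: $A_{x}=A/N_{x}A$ is a localization of $A$ as a \emph{left} $\Cc(M)$-module only, $N_{x}A$ is not a two-sided ideal, and products do not localize factorwise, so the germwise identities $a_{y}S(a_{y})=1$ cannot be assembled into a global statement. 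Moreover, the axioms give no direct handle on $S(a)a$ or $aS(a)$: axiom (v) applied to $a$ and to $S(a)$ (using $\cm(a)=a\ten a'$ and $\cm(S(a))=S(a')\ten S(a)=S(a)\ten S(a')$) yields only the \emph{mixed} products $S(a)a'=\cu(S(a))$, $a'S(a)=\cu(a)$ and $aS(a')=\cu(a)$. This is precisely why the paper works throughout with the good pair $a,a'$ rather than with $a$ alone: it uses that $S(a')a$ and $S(a)a'$ are functions equal to $1$ on $V_{W,a}$ (these follow from axiom (v) together with the relation $fa'=a'(f\com\tau_{W',a'})$), cancels $S(a')a$ between $D_{1}$ and $D_{2}$ to get multiplicativity of $aDS(a')$, and cancels $S(a)a'$ to show that $T_{V_{W,a},S(a)}$ is a two-sided inverse. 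Note also that your formula $T_{W,a}(D)=aDS(a)$ silently replaces the defining expression $aDS(a')$; the two agree on $\D(A)|_{V_{W,a}}$, but only by the computation carried out in the paper just before the proposition, which again requires the partner $a'$.

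Your specific claims are in fact true, but the proof you would need is not the one you propose. One can recover them from the coalgebra axioms rather than from localization: multiplicativity of $\cm$ gives
\[
\cm(aS(a))=(a\ten a')(S(a')\ten S(a))=aS(a')\ten a'S(a)=\cu(a)\ten\cu(a)\;,
\]
and since $(\id\ten\cu)\com\cm=\id$ this forces $aS(a)=\cu(a)^{2}\in\Cc(M)$, which equals $1$ on $W$; similarly $\cm(S(a)a)=S(a')a\ten S(a)a'$ with both factors in $\Cc(M)$, whence $S(a)a=(S(a')a)(S(a)a')$ equals $1$ on $V_{W,a}$. With this supplied, your cancellation argument for multiplicativity and for the inverse goes through and is essentially the paper's. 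As written, however, the proposal asserts rather than proves its central identity, so the argument is incomplete.
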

\begin{proof}
(i) This part was proven in \cite{Mrc07_1} and is in fact
the definition of $\tau_{W,a}$.

(iii) First we show that $T_{W,a}\left(\D(A)|_{V_{W,a}}\right)\subset\D(A)|_{W}$.
Choose $a'\in G^{S}_{w}(A)$ so that $a,a'$ is a good pair.
Let $D\in\D(A)|_{V_{W,a}}$ and choose
$f\in\Cc(V_{W,a})$ such that $fD=D$. Then we have
\[
T_{W,a}(D)=aDS(a')=afDS(a')=(f\com\tau_{W,a}^{-1})aDS(a')\in\D(A)|_{W}\;.
\]

Next, we show that the restriction of $T_{W,a}$ to
$\D(A)|_{V_{W,a}}$ is multiplicative. The function $S(a')a\in\Cc(M)$
equals to $1$ on $V_{W,a}$, thus it acts as a two-sided unit on
$\D(A)|_{V_{W,a}}$. For any
$D_{1},D_{2}\in\D(A)|_{V_{W,a}}$ it follows
\[
T_{W,a}(D_{1}D_{2})=aD_{1}D_{2}S(a')=aD_{1}S(a')aD_{2}S(a')
=T_{W,a}(D_{1})T_{W,a}(D_{2})\;.
\]
By replacing $a$ with $S(a)$ in the above arguments we see that
$T_{V_{W,a},S(a)}$ restricts to a homomorphism of algebras
$\D(A)|_{W}\ra\D(A)|_{V_{W,a}}$. Since $aDS(a')=a'DS(a)$ for any
$D\in\D(A)|_{V_{W,a}}$, we have
\[
T_{V_{W,a},S(a)}(T_{W,a}(D))=S(a)aDS(a')a'=S(a)a'DS(a)a'\;.
\]
The function $S(a)a'\in\Cc(M)$ equals to $1$ on $V_{W,a}$, hence
$T_{V_{W,a},S(a)}(T_{W,a}(D))=D$. Analogous arguments show
that $T_{W,a}\com T_{V_{W,a},S(a)}=\id_{\D(A)|_{W}}$.

(ii) This follows from (iii).
\end{proof}

\subsection{Spectral semidirect product Lie groupoid}
\label{Spectral semidirect product Lie groupoid}

In Subsection \ref{The Hopf algebroid associated to a semidirect product}
we have constructed the Hopf algebroid $G\ltimes\Univ(\fb)$ associated to
a semidirect product $G\ltimes B$ of an \'{e}tale Lie groupoid $G$ and a bundle
of connected Lie groups $B$. Our aim now is to describe to what extent
the Lie groupoid $G\ltimes B$ can be reconstructed
from $G\ltimes\Univ(\fb)$. We will show how to assign to a Hopf $\Cc(M)$-algebroid,
satisfying certain conditions, its spectral semidirect product Lie groupoid.

We will assume from now on that $A$ is a Hopf $\Cc(M)$-algebroid 
which is locally free as a left $\Cc(M)$-module. 
If the $(\RR,\Cc(M))$-Lie algebra $\Prim(A)$ is $S$-invariant
and locally free of constant finite rank as a $\Cc(M)$-module 
(if $M$ is compact, this last condition
is equivalent to $\Prim(A)$ being finitely generated
and projective as a $\Cc(M)$-module),
then there exists a bundle of Lie algebras
$\fb(\Prim(A))$ over $M$ such that $\Gc(\fb(\Prim(A)))\cong\Prim(A)$.
Its fiber over $x\in M$ is given by
\[
\fb(\Prim(A))_{x}=\Prim(A)(x)=\Prim(A)/I_{x}\Prim(A)\;,
\]
with the Lie bracket induced from the one on $\Prim(A)$.
The universal enveloping algebra $\Univ(\fb(\Prim(A)))$
is a Hopf $\C(M)$-algebroid and a Hopf algebra over $\C(M)$, while
$\Univ_{c}(\fb(\Prim(A)))=\Cc(M)\ten_{\C(M)}\Univ(\fb(\Prim(A)))$
is a Hopf $\Cc(M)$-algebroid and a Hopf algebra over $\Cc(M)$.
Observe that we have
$\Univ_{c}(\fb(\Prim(A)))=\Univ(\Cc(M),\Prim(A))$.

\begin{prop}\label{prop_injective map from U(P(A))}
Let $A$ be a Hopf $\Cc(M)$-algebroid which is locally free as a $\Cc(M)$-module.
If the $\Cc(M)$-module $\Prim(A)$ is $S$-invariant and locally free of constant finite rank,
then the natural homomorphism $\Univ_{c}(\fb(\Prim(A)))\ra A$
induces an isomorphism of algebras $\Univ_{c}(\fb(\Prim(A)))\ra\D(A)$.
\end{prop}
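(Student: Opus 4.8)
The plan is to show the map is onto $\D(A)$ by construction and injective by reducing to localizations. The natural homomorphism $\phi\colon\Univc(\fb(\Prim(A)))\ra A$ is the algebra map determined by the universal property: it is the identity on the base $\Cc(M)$ and on the degree-one generators $\Prim(A)=\Gc(\fb(\Prim(A)))$. Since $\D(A)$ is by definition the subalgebra of $A$ generated by $\Cc(M)$ and $\Prim(A)$, the image of $\phi$ is exactly $\D(A)$, so $\phi$ is surjective onto $\D(A)$. Moreover $\phi$ is a homomorphism of coalgebras over $\Cc(M)$: the two comultiplications agree on the generators (on $\Cc(M)$ by axiom (i), on $\Prim(A)$ because these elements are primitive), and $\cm$ is multiplicative, so $\cm\com\phi=(\phi\ten\phi)\com\cm$ throughout. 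It therefore remains to prove that $\phi$ is injective.

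By the localization principle, $\phi$ is injective as soon as every localization $\phi_{x}\colon\Univc(\fb(\Prim(A)))_{x}\ra A_{x}$ is injective. Here I would identify $\Univc(\fb(\Prim(A)))_{x}$ with the universal enveloping algebra $U_{x}:=\Univ(\C(M)_{x},\Prim(A)_{x})$ of the free $\C(M)_{x}$-module $\Prim(A)_{x}$; freeness holds because $\Prim(A)$ is locally free, and the enveloping algebra is the honest one because $\Prim(A)$ is $S$-invariant and hence has trivial anchor, so its elements commute with $R_{x}:=\C(M)_{x}$. The PBW theorem over the ring $R_{x}$ then shows $U_{x}$ is a free $R_{x}$-module on the ordered monomials in a fixed $R_{x}$-basis of $\Prim(A)_{x}$, carrying the PBW filtration $F_{0}\subset F_{1}\subset\cdots$ with $F_{0}=R_{x}$, $F_{1}=R_{x}\oplus\Prim(A)_{x}$, and $\cm(F_{k})\subset\sum_{i+j=k}F_{i}\ten F_{j}$. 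Finally, $A_{x}$ is a free $R_{x}$-module by the local freeness hypothesis on $A$.

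I would then prove injectivity of $\phi_{x}$ by induction on $k$. For $k=0$ this is injectivity of $R_{x}\ra A_{x}$, and for $k=1$ one splits off $R_{x}$ using $\cu_{x}$ and uses that $\Prim(A)_{x}\ra A_{x}$ is injective, both by localizing inclusions in $A$. For the step, take $u\in F_{k}$ with $\phi_{x}(u)=0$ and put $t=\cm(u)-u\ten 1-1\ten u$, which lies in $F_{k-1}\ten F_{k-1}$ since the surviving terms have both tensor degrees between $1$ and $k-1$. As $\phi_{x}$ is a coalgebra map and $\phi_{x}(u)=0$, we get $(\phi_{x}\ten\phi_{x})(t)=0$. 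Now $\phi_{x}|_{F_{k-1}}$ is injective by hypothesis, and since $F_{k-1}$ and $A_{x}$ are free, hence flat, the factorization $\phi_{x}\ten\phi_{x}=(\phi_{x}|_{F_{k-1}}\ten\id)\com(\id\ten\phi_{x}|_{F_{k-1}})$ is injective on $F_{k-1}\ten F_{k-1}$; therefore $t=0$, i.e. $u$ is primitive in $U_{x}$. In characteristic zero $\Prim(U_{x})=\Prim(A)_{x}\subset F_{1}\subset F_{k-1}$, so the induction hypothesis gives $u=0$. Hence each $\phi_{x}$ is injective, $\phi$ is injective, and $\phi\colon\Univc(\fb(\Prim(A)))\ra\D(A)$ is an isomorphism of algebras.

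The hard part is this inductive step, precisely because $R_{x}=\C(M)_{x}$ is not a field: it is non-Noetherian and not $\mathfrak{m}$-adically separated (a flat function has nonzero germ lying in every power of the maximal ideal), so one cannot pass to the residue field $\RR$. The remedy is to exploit the freeness, hence flatness, of both $A_{x}$ and of the PBW pieces $F_{k}$ over $R_{x}$, which is exactly what keeps the injection $\phi_{x}|_{F_{k-1}}$ injective after tensoring and thus makes the coalgebra induction go through over $R_{x}$. The remaining ingredients — PBW over a ring for a free module, and the identification $\Prim(U_{x})=\Prim(A)_{x}$ in characteristic zero — are standard.
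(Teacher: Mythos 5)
Your proposal is correct and follows essentially the same route as the paper: surjectivity onto $\D(A)$ by definition, reduction of injectivity to the localizations $\nu_{x}$, and the PBW theorem over $\C(M)_{x}$ for the free module $\Prim(A)_{x}$ to identify primitives and conclude. The only difference is that you spell out in full the filtration--flatness argument (where the local freeness of $A$ is genuinely used) behind the step the paper dismisses as ``well known, injectivity on primitives implies injectivity of $\nu_{x}$.''
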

\begin{proof}
The image of the natural homomorphism
$\nu\!:\Univ_{c}(\fb(\Prim(A)))\ra A$
equals $\D(A)$ by definition, so we only need
to show that $\nu$ is injective. For this, it is sufficient to prove that $\nu$ 
is locally injective. Choose any $x\in M$. Since $\Prim(A)_{x}$
is a free $\C(M)_{x}$-module, 
it follows from PBW-theorem that $\Prim(\Univ(\Cc(M),\Prim(A))_{x})=\Prim(A)_{x}$, which
proves that
$\nu_{x}|_{\Prim(A)_{x}}$ is injective. 
As it is well known, this implies that
$\nu_{x}$ is injective.
\end{proof}

Next, we will use the operators $T_{W,a}$
to define an action of the spectral \'{e}tale groupoid $\Gsp(A)$ on
$\fb(\Prim(A))$. 
Let $a\in G_{w}^{S}(A)$ be good, normalized on an open subset $W$ of $M$.
By Proposition \ref{prop_isomorphism T}, the
map $T_{W,a}$ restricts to an isomorphism
$\Prim(A)|_{V_{W,a}}\ra\Prim(A)|_{W}$ of Lie algebras. Moreover,
by considering $\Prim(A)|_{W}$ as a left $\Cc(V_{W,a})$-module
via $(\tau_{W,a}^{-1})^{*}\!:\Cc(V_{W,a})\ra\Cc(W)$, the map $T_{W,a}$ becomes
an isomorphism of
$\Cc(V_{W,a})$-modules, hence it is given by an
isomorphism of vector bundles
$(\mu_{W,a},\tau_{W,a})\!:\fb(\Prim(A))|_{V_{W,a}}\ra\fb(\Prim(A))|_{W}$.
These maps, for all possible choices of $a$ and $W$,
assemble to a smooth map
\[
\mu\!:\Gsp(A)\times_{M}\fb(\Prim(A))\ra\fb(\Prim(A))\;.
\]
Explicitly, for any $g\in\Gsp(A)(x,y)$ and any
$p\in\fb(\Prim(A))_{x}$ we have
\[
g\cdot p=\mu(g,p)=T_{W,a}(D)(y)\;,
\]
where $a$ is a good element of $G_{w}^{S}(A)$ normalized on $W$
with $y\in W$ such that $a_{y}=g$ and
$D\in\fb(\Prim(A))|_{V_{W,a}}$ satisfies $D(x)=p$.

\begin{prop}
The map $\mu$ defines an action of the groupoid $\Gsp(A)$ on the
bundle of Lie algebras $\fb(\Prim(A))$.
\end{prop}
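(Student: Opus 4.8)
The plan is to verify, one by one, the defining properties of a groupoid action on a bundle of Lie algebras: that $\mu$ is well defined and smooth, that it is compatible with the bundle projection (so $\Gsp(A)(x,y)$ carries $\fb(\Prim(A))_{x}$ into $\fb(\Prim(A))_{y}$), that units act trivially, that $\mu$ respects composition, and that each fibre map $\mu(g,-)$ is an isomorphism of Lie algebras. Two of these come essentially for free from Proposition \ref{prop_isomorphism T}: since the bundle isomorphism $(\mu_{W,a},\tau_{W,a})$ covers $\tau_{W,a}$ and $\tau_{W,a}(x)=y$ whenever $a_{y}=g\in\Gsp(A)(x,y)$, compatibility with the projection is automatic; and since $T_{W,a}$ restricts to an isomorphism of Lie algebras $\Prim(A)|_{V_{W,a}}\ra\Prim(A)|_{W}$, the induced fibre map $\mu(g,-)=\mu_{W,a}\colon\fb(\Prim(A))_{x}\ra\fb(\Prim(A))_{y}$ is an isomorphism of Lie algebras. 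Smoothness is a local statement over a bisection $a_{W}$, where $\mu$ is identified with the smooth bundle map $(\mu_{W,a},\tau_{W,a})$, so the various fibre maps assemble to a smooth $\mu$.

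The first substantial point is that $g\cdot p$ does not depend on the auxiliary data. Independence of the representative $D$ (only $D(x)=p$ matters) and of the partner $a'$ is already built into the definition of $T_{W,a}$. It remains to show independence of $a$ and $W$ among good elements normalized near $y$ with $a_{y}=g$. Here I would use that $a_{y}=\tilde a_{y}$ in $A_{y}$ means precisely that the germs of $a$ and $\tilde a$ at $y$ coincide, i.e.\ that $\zeta a=\zeta\tilde a$ for every $\zeta\in\Cc(W'')$ on some neighbourhood $W''\ni y$; consequently the implicitly defined diffeomorphisms $\tau_{W,a}$ and $\tau_{\tilde W,\tilde a}$ agree on the preimage $V''=\tau_{W,a}^{-1}(W'')$ of $W''$, which is a neighbourhood of $x$. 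Restricting both operators to $\D(A)|_{V''}$ and comparing values at $y$, one finds $T_{W,a}(D)(y)=T_{\tilde W,\tilde a}(D)(y)$ for any $D$ supported in $V''$ with $D(x)=p$, which is exactly the required well-definedness of $\mu_{W,a}$ on $\fb(\Prim(A))_{x}$.

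For the unit, take $a=\eta$ with $\eta\in\Cc(M)$ equal to $1$ near $x$; then $\eta,\eta$ is a good pair, $\eta_{x}$ is the unit arrow $1_{x}$, $\tau_{W,\eta}=\id$, and $T_{W,\eta}(D)=\eta D\eta=D$, so $1_{x}\cdot p=D(x)=p$. For compatibility with composition, let $g\in\Gsp(A)(y,z)$ and $h\in\Gsp(A)(x,y)$, choose good elements $a,b$ with $a_{z}=g$, $b_{y}=h$ and partners $a',b'$, and shrink the bisections so that the source neighbourhood $W'$ of $b$ lies inside $V_{W,a}$, making $T_{W,a}\com T_{W',b}$ defined. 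The key identity is
\[
T_{a,a'}\bigl(T_{b,b'}(D)\bigr)=a\,bD\,S(b')\,S(a')=(ab)\,D\,S(a'b'),
\]
which uses only that $S$ is an anti-homomorphism; together with the computation $\cm(ab)=ab\ten a'b'$ and $\cm(S(ab))=S(a'b')\ten S(ab)$, showing that $ab,a'b'$ is again a good pair (normalized near $z$), this identifies $T_{W,a}\com T_{W',b}$ with $T_{W'',ab}$. Since arrows of $A$ multiply by convolution \cite{Mrc07_1}, we have $(ab)_{z}=gh$, and evaluating at $z$ yields $g\cdot(h\cdot p)=\mu_{W,a}(\mu_{W',b}(p))=\mu_{W'',ab}(p)=(gh)\cdot p$. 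I expect the main obstacle to be the well-definedness step together with the bookkeeping needed to align the domains of $T_{W,a}$ and $T_{W',b}$ and to check that $ab$ is good and normalized near $z$; once the germ argument and the composition identity are in place, the remaining axioms follow formally.
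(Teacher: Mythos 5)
Your proof is correct and follows essentially the same route as the paper: units are checked with a function equal to $1$ near $x$, and composition via the identity $T_{a,a'}\com T_{b,b'}=T_{ab,a'b'}$ together with the fact that the product of good elements represents the composed arrow, with Proposition \ref{prop_isomorphism T} supplying the fibrewise Lie algebra isomorphisms. The extra points you verify (well-definedness via germs, smoothness, compatibility with the projection) are handled in the paper in the paragraph constructing $\mu$ rather than in the proof of the proposition itself.
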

\begin{proof}
Any $g\in\Gsp(x,y)$ acts as an isomorphism
$\fb(\Prim(A))_{x}\ra\fb(\Prim(A))_{y}$ of Lie algebras by
Proposition \ref{prop_isomorphism T}. 

Represent an arrow $1_{x}\in\Gsp(A)$ by a smooth function
$f\in\Cc(M)$ with $f_{x}=1$. 
Choose a neighbourhood $W$ of $y$ such that $f$ is normalized
on $W$.
For any $p\in\fb(\Prim(A))_{x}$,
represented by $D\in\Prim(A)|_{W}$, we obtain
\[
1_{x}\cdot p=
T_{W,f}(D)(x)=(fDS(f))(x)=(fDf)(x)=D(x)=p\;.
\]

Let $a,b\in G_{w}^{S}(A)$ be good, normalized on open subsets
$W_{a}$ respectively $W_{b}$ of $M$, and let $y\in W_{a}$ and
$z\in W_{b}$ be such that $h=a_{y}$ is an arrow from $x$ to $y$
and $g=b_{z}$ is an arrow from $y$ to $z$. We can assume without loss
of generality
that $V_{W_{b},b}=W_{a}$. The arrow $gh=(ba)_{z}$ is then
represented by the element $ba\in G_{w}^{S}(A)$, which is good
and normalized on $W_{b}$, and for any 
$p\in\fb(\Prim(A))_{x}$,
represented by $D\in\Prim(A)|_{V_{W_{a},a}}$, 
we have
\[
(gh)\cdot p=
(T_{W_{b},ba}(D))(z)=baDS(ba)(z)=b(aDS(a))S(b)(z)=g\cdot(h\cdot p)\;.\qedhere
\] 
\end{proof}

The bundle $\fb(\Prim(A))$ of Lie algebras integrates to a
bundle $\Bsp(A)$ of simply connected Lie groups \cite{DoLaz66}.
Moreover, by the Lie's second theorem for Lie algebroids
\cite{MacXu00,MoMrc03}, we can integrate the action of $\Gsp(A)$
on $\fb(\Prim(A))$ to an action of $\Gsp(A)$ on $\Bsp(A)$. The
corresponding Lie groupoid
\[
\Gsp(A)\ltimes\Bsp(A)
\]
will be
referred to as the 
{\em spectral semidirect product Lie groupoid} associated to $A$.

\subsection{Cartier-Gabriel-Kostant theorem for Hopf algebroids}

The Cartier-Gabriel-Kostant decomposition theorem describes the
structure of Hopf algebras. It states that a Hopf algebra $H$
is isomorphic,
under some conditions,
to the twisted tensor product
$H\cong G(H)\ltimes U(P(H))$, where the group of
grouplike elements $G(H)$ naturally acts on the universal
enveloping algebra $U(P(H))$ of $P(H)$ by conjugation.
For example, this is true if $H$ is cocommutative and
the base field is algebraically closed.
To be able
to similarly decompose a Hopf algebroid over $\Cc(M)$, one first
needs to impose several additional requirements, most of which are
automatically fulfilled in the case of Hopf algebras over $\RR$.

For any Hopf $\Cc(M)$-algebroid $A$ which is locally free as a left
$\Cc(M)$-module and such that $\Prim(A)$ is a $S$-invariant,
locally free left $\Cc(M)$-module of
constant finite rank, we constructed the
spectral semidirect product Lie groupoid $\Gsp(A)\ltimes\Bsp(A)$
and the corresponding Hopf algebroid
\[
\Gsp(A)\ltimes\Univ(\fb(\Prim(A)))\;.
\]
To compare the Hopf
algebroids $\Gsp(A)\ltimes\Univ(\fb(\Prim(A)))$ and $A$, we
define a map
\[
\Theta_{A}\!:\Gsp(A)\ltimes\Univ(\fb(\Prim(A)))\ra A
\]
by
\[
\Theta_{A}\Big(\sum_{i=1}^{n}
D_{i}\com t|_{(a_{i})_{W_{i}}}\Big)=\sum_{i=1}^{n}D_{i}a_{i}\;,
\]
where $a_{i}\in G^{S}_{w}(A)$ is good, normalized on an open subset
$W_{i}$ of $M$, and
$D_{i}\in
\Univ(\fb(\Prim(A)))|_{W_{i}}=
\Univ_{c}(\fb(\Prim(A)))|_{W_{i}}\cong
\D(A)|_{W_{i}}$, for $i=1,\ldots,n$.
The last isomorphism follows from Proposition \ref{prop_injective map
from U(P(A))}.

\begin{prop}
The map $\Theta_{A}\!:\Gsp(A)\ltimes\Univ(\fb(\Prim(A)))\ra A$ is a 
homomorphism of Hopf algebroids.
\end{prop}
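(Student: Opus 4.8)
The plan is to check each axiom of a homomorphism of Hopf algebroids on elements supported in a single bisection, since by $\Cc(M)$-bilinearity and the decomposition of any function into a finite sum of bisection-supported pieces it suffices to treat such terms. Underlying every step is one preliminary remark that I would record first: the algebra isomorphism $\nu\!:\Univ_{c}(\fb(\Prim(A)))\ra\D(A)$ of Proposition \ref{prop_injective map from U(P(A))} is automatically an isomorphism of coalgebras compatible with the antipode, i.e.\ it intertwines $\cm_{0},\cu_{0},S_{0}$ with $\cm,\cu,S$. Indeed $\Univ_{c}(\fb(\Prim(A)))$ is generated as an algebra by $\Cc(M)$ and $\Prim(A)$, on which these structure maps are prescribed, and $\nu$ is the identity there; since $\cm$ is multiplicative and $S$ antimultiplicative by axioms (iii) and (iv), agreement on generators forces agreement everywhere. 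Well-definedness of $\Theta_{A}$ I would then deduce from the universal property of $\ten_{\Cc(M)}$: under the isomorphism $\Gsp(A)\ltimes\Univ(\fb(\Prim(A)))\cong\C(M;\cU(\fb(\Prim(A))))\ten_{\Cc(M)}\Cc(\Gsp(A))$ the map $\Theta_{A}$ is the one induced by the $\Cc(M)$-balanced pairing $(D_{0},a')\mapsto\nu(D_{0})\,a'$, where the right-hand $a'$ is read via the canonical map $\Cc(\Gsp(A))\ra A$ as the corresponding normalized spectral element; balancedness is immediate from associativity in $A$ and from $\nu$ being an algebra map fixing $\Cc(M)$.

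On a single term, $\Theta_{A}(D\circ t|_{(a)_{W}})=\nu(D)\,a$ with $a\in G^{S}_{w}(A)$ good and normalized on $W$. For multiplicativity I would take a second such term $E\circ t|_{(b)_{W'}}$ and reduce, using local units, to the composable case $V_{W,a}=W'$. By the convolution formula (\ref{eq_convolution product}) the product equals $\bigl(D\cdot\mu_{(a)_{W}}(E)\bigr)\circ t|_{(a)_{W}\cdot(b)_{W'}}$, the product bisection $(a)_{W}\cdot(b)_{W'}$ is represented by $ab$, and the decisive point is that the $\Gsp(A)$-action on $\cU(\fb(\Prim(A)))$ entering $\mu_{(a)_{W}}$ corresponds, under $\nu$, exactly to the operator $T_{W,a}$ on $\D(A)$ --- this is precisely how that action was constructed in the previous subsection. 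Hence $\nu(\mu_{(a)_{W}}(E))=T_{W,a}(\nu(E))=a\,\nu(E)\,S(a')$ for a good partner $a'$ of $a$, and
\[
\Theta_{A}\bigl((D\circ t|_{(a)_{W}})(E\circ t|_{(b)_{W'}})\bigr)=\nu(D)\,a\,\nu(E)\,S(a')\,a\,b\;.
\]
Because $S(a')a\in\Cc(M)$ equals $1$ on $V_{W,a}=W'\supseteq\supp(\nu(E))$ it acts as a two-sided unit on $\D(A)|_{W'}$, so $\nu(E)S(a')a=\nu(E)$ and the right-hand side collapses to $\nu(D)a\,\nu(E)b=\Theta_{A}(D\circ t|_{(a)_{W}})\,\Theta_{A}(E\circ t|_{(b)_{W'}})$.

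Compatibility with $\cu$, $\cm$ and $S$ I would verify on a single term as well. For the counit, axiom (iii) gives $\cu(\nu(D)a)=\cu(\nu(D)\cu(a))=\cu(\nu(D))=\cu_{0}(D)$, using that $\cu(a)=1$ on $W\supseteq\supp(\nu(D))$ and the preliminary remark; this matches $\cu$ of the domain. For the comultiplication, writing $\cm_{0}(D)=\sum D^{1}\ten D^{2}$ and using $\cm(a)=a\ten a'$ together with the intertwining of $\nu$,
\[
\cm(\nu(D)a)=\cm(\nu(D))\,\cm(a)=\textstyle\sum\nu(D^{1})a\ten\nu(D^{2})a'\;,
\]
and since each $\nu(D^{2})$ is supported in $W$, where the witness normalization yields $\nu(D^{2})a'=\nu(D^{2})a$, this equals $(\Theta_{A}\ten\Theta_{A})$ applied to the comultiplication of $D\circ t|_{(a)_{W}}$. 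For the antipode, the twisted-product formula $S(D_{0}\ten a')=\mu_{(a)_{W}^{-1}}(S_{0}(D_{0}))\ten S_{G}(a')$, the correspondence $\nu(\mu_{(a)_{W}^{-1}}(S_{0}(D)))=T_{V_{W,a},S(a)}(S(\nu(D)))=S(a)\,S(\nu(D))\,a$ and the identification of the inverse bisection with $S(a)$ give $\Theta_{A}(S(D\circ t|_{(a)_{W}}))=S(a)\,S(\nu(D))\,a\,S(a)$; since $aS(a)\in\Cc(M)$ equals $1$ on $W\supseteq\supp(S(\nu(D)))$ this reduces to $S(a)\,S(\nu(D))=S(\nu(D)\,a)$, as required by axiom (iv).

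The hard part is the identity $\nu\com(\text{action on }\cU(\fb(\Prim(A))))=T_{W,a}\com\nu$ that drives both the multiplicativity and the antipode computations: it must be extracted from the very definition of the $\Gsp(A)$-action, which was built from $T_{W,a}$ on $\Prim(A)$ in the previous subsection, and then propagated to the whole enveloping algebra using that $\nu$ and $T_{W,a}$ are algebra isomorphisms agreeing on primitives (Propositions \ref{prop_invariance of cP(A)} and \ref{prop_isomorphism T}). The surrounding bookkeeping --- repeatedly replacing a good partner $a'$ by $a$ on the relevant support and invoking the local units $S(a')a$ and $aS(a)$ --- is routine, but the comultiplication step is the most delicate, being the only place where the weakly grouplike relation $\cm(a)=a\ten a'$ and the good-pair normalization must be reconciled across both tensor factors at once.
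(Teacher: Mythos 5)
Your proposal is correct and follows essentially the same route as the paper: the heart of both arguments is the identity $\nu(\mu_{b_{W_b}}(D))=T_{b,b'}(\nu(D))$ together with the insertion of the local unit $S(b')b=1$ on the relevant support to pass between $bD$ and $\mu_{b_{W_b}}(D)b$. The only divergence is cosmetic — you verify the counit, comultiplication and antipode compatibilities by explicit single-term computations, whereas the paper dismisses the coalgebra part as straightforward and handles the antipode by checking it on the generating subalgebras $\Univc(\fb(\Prim(A)))$ and $\Cc(\Gsp(A))$ and invoking (anti)multiplicativity.
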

\begin{proof}
It is straightforward to check that $\Theta_{A}$ is a well defined
homomorphism of coalgebras over $\Cc(M)$, so it is sufficient to
prove that $\Theta_{A}$ is multiplicative
and commutes with the antipodes.

Let $a,b\in G^{S}_{w}(A)$ be good,
normalized on open subsets $W_{a}$ respectively $W_{b}$ of $M$,
and assume for simplicity that $V_{W_{b},b}=W_{a}$. For any
$D\in\Univ(\fb(\Prim(A)))|_{W_{a}}$ and
$E\in\Univ(\fb(\Prim(A)))|_{W_{b}}$ we have
\[
\Theta_{A}\big((E\com t|_{b_{W_{b}}})(D\com t|_{a_{W_{a}}})\big)
=
\Theta_{A}(E\mu_{b_{W_{b}}}(D)\com t|_{(ba)_{W_{b}}})
=
E\mu_{b_{W_{b}}}(D)ba\;.
\]
On the other hand we have
\[
\Theta_{A}(E\com t|_{b_{W_{b}}})\Theta_{A}(D\com t|_{a_{W_{a}}})=EbDa\;.
\]
Choose $b'\in G^{S}_{w}(A)$ such that $b,b'$ is a good pair.
Since $D\in\Univ(\fb(\Prim(A)))|_{V_{W_{b},b}}$ and
$S(b')b$ equals $1$ on $V_{W_{b},b}$, 
we have
$D=DS(b')b$ and hence
\[
EbDa=EbDS(b')ba
=ET_{b,b'}(D)ba=E\mu_{b_{W_{b}}}(D)ba\;.
\]

To see that $\Theta_{A}$ commutes with antipodes, it is now enough
to observe that this holds true on the subalgebras
$\Univ_{c}(\fb(\Prim(A)))$ (Proposition \ref{prop_injective map from
U(P(A))}) and $\Cc(\Gsp(A))$ (see \cite{Mrc07_1}) which generate
$\Gsp(A)\ltimes\Univ(\fb(\Prim(A)))$ as an algebra.
\end{proof}

Our construction of $\Theta_{A}$ extends the map
$\Cc(\Gsp(A))\ra A$ given in \cite{Mrc07_1} (which is in fact
defined for an arbitrary Hopf algebroid). The latter map is an
isomorphism if and only if $A$ is locally grouplike, that is if
$A_{x}$ is freely generated by $G^{S}(A_{x})$ for every $x\in M$.
In general, we consider $\Cc(\Gsp(A))$ as the best
possible approximation of $A$ by a locally grouplike Hopf
algebroid.

We resume by characterizing the Hopf algebroids $A$ which can be
decomposed as $\Gsp(A)\ltimes\Univ(\fb(\Prim(A)))$. Since we do
not assume $A$ to be cocommutative, $\Theta_{A}$ is not
necessarily an isomorphism (even if $\C(M)\cong\RR$). In his proof
given in \cite{Car07}, Cartier uses cocommutativity to show
that $H$ is freely generated by $G(H)$ as a left $U(P(H))$-module.
For Hopf algebroids over $\Cc(M)$ we need to replace this condition
with a family of local conditions for each $x\in M$.

Let $A$ be a Hopf $\Cc(M)$-algebroid such that $\Prim(A)$ has
trivial anchor. If we localize $\D(A)$ at $x\in M$, we obtain the
$\C(M)_{x}$-algebra $\D(A)_{x}$. Since $\Cc(M)$ and $\D(A)$
commute, the left $\C(M)_{x}$-module
$A_{x}$ naturally becomes a left $\D(A)_{x}$-module, for
every $x\in M$. For any $g\in G^{S}(A_{x})$
we denote by
$A_{x}^{g}$ the (free) left $\D(A)_{x}$-submodule
of $A_{x}$ generated by $g$.

\begin{theo}[Cartier-Gabriel-Kostant]\label{theo_Main theorem}
Let $A$ be a Hopf $\Cc(M)$-algebroid and suppose that $A$ is
locally free as a $\Cc(M)$-module. If
\begin{enumerate}
\item [(i)] the left $\Cc(M)$-module $\Prim(A)$
is $S$-invariant locally free of constant finite rank and

\item [(ii)] $A_{x}=\bigoplus_{g\in G^{S}(A_{x})}A_{x}^{g}$ for
any $x\in M$,
\end{enumerate}
then the map
$\Theta_{A}\!:\Gsp(A)\ltimes\Univ(\fb(\Prim(A)))\ra A$ is an
isomorphism of Hopf $\Cc(M)$-algebroids.
In particular, the Hopf $\Cc(M)$-algebroid $A$
is isomorphic to the Hopf algebroid associated
to its spectral semidirect product Lie groupoid
$\Gsp(A)\ltimes\Bsp(A)$.
\end{theo}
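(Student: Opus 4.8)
The preceding proposition shows that $\Theta_{A}$ is a homomorphism of Hopf $\Cc(M)$-algebroids, so it remains only to prove that it is bijective. Since a homomorphism of $\Cc(M)$-modules is bijective if and only if all of its localizations are bijective \cite{Mrc07_2}, the plan is to fix $x\in M$ and show that the localized map
\[
(\Theta_{A})_{x}\!:\big(\Gsp(A)\ltimes\Univ(\fb(\Prim(A)))\big)_{x}\ra A_{x}
\]
is a bijection. First I would unravel the left-hand side. Localizing the isomorphism $\Gsp(A)\ltimes\Univ(\fb(\Prim(A)))\cong\C(M;\cU(\fb(\Prim(A))))\ten_{\Cc(M)}\Cc(\Gsp(A))$ of $\Cc(M)$-modules at $x$, and using that $\Gsp(A)$ is \'{e}tale (so the $t$-fibre over $x$ is discrete and $\Cc(\Gsp(A))_{x}$ is freely spanned by the arrows $g\in G^{S}(A_{x})$), together with the identification $\C(M;\cU(\fb(\Prim(A))))_{x}\cong U(\Prim(A)(x))\cong\D(A)_{x}$ coming from Proposition \ref{prop_injective map from U(P(A))}, I expect a decomposition
\[
\big(\Gsp(A)\ltimes\Univ(\fb(\Prim(A)))\big)_{x}\cong\bigoplus_{g\in G^{S}(A_{x})}\D(A)_{x}\cdot g\;,
\]
under which $(\Theta_{A})_{x}$ sends the summand attached to $g$ by $D\cdot g\mapsto Dg$, i.e. onto the submodule $A_{x}^{g}=\D(A)_{x}\cdot g$ of $A_{x}$.

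Next I would check that each summand map $\D(A)_{x}\cdot g\ra A_{x}^{g}$ is an isomorphism, that is, that $A_{x}^{g}$ is free of rank one over $\D(A)_{x}$. Since $g$ is an arrow of $\Gsp(A)$, it is an invertible grouplike element of the unital algebra $A_{x}$, with inverse $S(g)$; hence right multiplication by $g$ is invertible on $A_{x}$, and $Dg=0$ forces $D=DgS(g)=0$. As the inclusion $\D(A)\subseteq A$ is injective, so is its localization $\D(A)_{x}\ra A_{x}$ \cite{Mrc07_2}, and therefore $D\mapsto Dg$ is injective, giving $A_{x}^{g}\cong\D(A)_{x}$.

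Finally I would assemble these facts. By the displayed decomposition, $(\Theta_{A})_{x}$ is the direct sum over $g\in G^{S}(A_{x})$ of the isomorphisms $\D(A)_{x}\cdot g\xrightarrow{\ \sim\ }A_{x}^{g}$, so its image is $\sum_{g}A_{x}^{g}$. Hypothesis (ii) says precisely that this sum is direct and equals $A_{x}$: surjectivity is then immediate, while injectivity follows because a relation $\sum_{g}D_{g}g=0$ forces each $D_{g}g=0$ by directness and hence each $D_{g}=0$ by freeness. Thus $(\Theta_{A})_{x}$ is bijective for every $x$, so $\Theta_{A}$ is an isomorphism of Hopf $\Cc(M)$-algebroids. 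The final assertion is then immediate: by the construction of Subsection \ref{The Hopf algebroid associated to a semidirect product}, $\Gsp(A)\ltimes\Univ(\fb(\Prim(A)))$ is exactly the Hopf algebroid associated to the semidirect product groupoid $\Gsp(A)\ltimes\Bsp(A)$, whose bundle of Lie groups $\Bsp(A)$ integrates $\fb(\Prim(A))$.

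The main obstacle I anticipate is the first step: carefully computing the localization of the twisted tensor product convolution Hopf algebroid and verifying that $(\Theta_{A})_{x}$ really does split as the direct sum of the canonical maps onto the $A_{x}^{g}$. This requires combining the behaviour of $\Cc(\Gsp(A);\cU(\fb(\Prim(A))))$ under localization along the \'{e}tale target map with the PBW-identification $\D(A)_{x}\cong U(\Prim(A)(x))$; once this bookkeeping is in place, the freeness of $A_{x}^{g}$ and the use of hypothesis (ii) are comparatively routine.
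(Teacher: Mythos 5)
Your proposal is correct and follows essentially the same route as the paper: localize at each $x\in M$, identify $\big(\Gsp(A)\ltimes\Univ(\fb(\Prim(A)))\big)_{x}$ with the free left $\D(A)_{x}$-module on the basis $G^{S}(A_{x})$ via Proposition \ref{prop_injective map from U(P(A))}, note that $(\Theta_{A})_{x}$ sends the basis element attached to $g$ to $g$ itself, and conclude from hypothesis (ii). The only addition is your explicit check that each $A_{x}^{g}$ is free of rank one over $\D(A)_{x}$ (using $aS(a')=\cu(a)$ and the injectivity of the localized inclusion $\D(A)_{x}\ra A_{x}$), a point the paper asserts parenthetically in the definition of $A_{x}^{g}$.
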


\begin{proof}
The map $\Theta_{A}$ is an isomorphism if and only if
$(\Theta_{A})_{x}$ is an isomorphism for every $x\in M$. 
By definition of the groupoid $\Gsp(A)$ we have
$t^{-1}(x)=G^{S}(A_{x})$ and therefore
\[
\big(\Gsp(A)\ltimes\Univ(\fb(\Prim(A)))\big)_{x}
\cong\!\!\!\bigoplus_{g\in\,G^{S}(A_{x})}\!\!\!
\Univ(\fb(\Prim(A)))_{x}\delta_{g}\;,
\]
where $\delta_{g}\in\Cc(\Gsp(A))$ is a function with germ $1$ at $g$
and support in a small bisection of $\Gsp(A)$. This means that
$\big(\Gsp(A)\ltimes\Univ(\fb(\Prim(A)))\big)_{x}$ is a free left
$\Univ(\fb(\Prim(A)))_{x}$-module with basis
$\{\delta_{g}\,|\,g\in\,G^{S}(A_{x})\}$. If we identify
$\Univ_{c}(\fb(\Prim(A)))$ with $\D(A)$ via $\Theta_{A}$, we can
consider
\[
(\Theta_{A})_{x}\!:(\Gsp(A)\ltimes\Univ(\fb(\Prim(A))))_{x}\ra A_{x}
\]
as a homomorphism of left $\D(A)_{x}$-modules, uniquely determined by
$(\Theta_{A})_{x}(\delta_{g})=g$. It follows
that $(\Theta_{A})_{x}$ is an isomorphism if and only if
$A_{x}$ is a free left $\D(A)_{x}$-module with basis $G^{S}(A_{x})$.
\end{proof}

\noindent
{\bf Acknowledgements:} 
We are grateful to I. Moerdijk, P. Cartier, M. Crainic
and N. Kowalzig for
helpful discussions related to this paper.

\end{document}